\def\stack#1#2#3{\rlap{#1}\lower#3\hbox{#2}}
\def\bfx{\mathbf{x}}
\def\bfd{\mathbf{d}}
\def\bfe{\mathbf{e}}
\newtheorem{theorem}{Theorem}
\newtheorem{lemma}{Lemma}
\newtheorem{remark}{Remark}
\newtheorem{assumption}{Assumption}
\def\hw{\hat{w}}
\def\red{}
\begin{document}
\placeTitlePage

\begin{outline}
Computing the electromagnetic field due to a periodic grating is critical for
assessing the performance of thin film solar voltaic devices.  In this paper we
investigate the computation of these fields in the time domain (similar problems also arise in simulating antennas).  Assuming a translation invariant periodic grating this reduces to solving the wave equation in a periodic domain. Materials used in practical devices have frequency dependent coefficients, and we
provide a first proof of existence and uniqueness for a general class of such materials.
Using Convolution Quadrature we can then prove time stepping error estimates.  We end with some preliminary numerical results that demonstrate the convergence and stability of the scheme.
\end{outline}
\tableofcontents

\section{Introduction}
We wish to approximate time domain electromagnetic scattering from a  periodic grating.  We shall assume that the grating is translation invariant in one direction, so that 
Maxwell's equations can simplified to obtain two Helmholtz equations governing the s- and p-polarized waves.
Our intended application is to modeling solar voltaic devices.  Usually such devices are modeled in the frequency domain, and for descriptions of frequency domain applications in this area see \cite{jin14,cai15}.  We will consider the problem in the time domain with the potential benefit of being able to compute results at a range of frequencies in one simulation although we do not investigate that aspect here. \red{Note that although our interest is in periodic gratings, similar problems also arise in antenna theory \cite{ril08} and \cite[Section10.2.2]{cai15}. We expect that the theory developed here can be extended to that case.}

We start by describing the problem assuming that the grating is translation invariant parallel to the $y$ axis, \red{so that the permittivity of the material in the grating is independent of $y$ (the magnetic permeability is assumed to be that of free space).}
Because of the reduction in dimension afforded by translation invariance\red{, Maxwell's equations can be reduced to the wave equation in two spatial dimensions (see for example \cite[Section 5.1]{cakoni-colton}). So} we assume that the electromagnetic wave is described by a scalar function $u=u(\bfx,t)$
depending on position $\bfx=(x,z)\in \mathbb{R}^2$ and time $t>0$ that satisfies
\begin{equation}
\frac{1}{c^2}b*\frac{\partial^2 u}{\partial t^2}=\nabla\cdot(a*\grad u)
\mbox{ in }\mathbb{R}^2,\;t>0.
\label{wave}
\end{equation}
Here $c>0$ is the speed of light in vacuum, the symbol $*$ denotes convolution in time and the functions $a$ and $b$ describe the
medium in which the electromagnetic field propagates. Two choices are of interest:
\begin{enumerate}
\item The choice 
\[
a=\delta(t)\mbox{ and } b=\epsilon_r
\]
where $\delta$ is the Dirac delta and $\epsilon_r$ is the time domain relative permittivity of the medium.  In this case the wave is said to be s-polarized and 
$u$ represents the $y$ component of the electric field.  
\item Alternatively 
\[
a=1/\epsilon_r\mbox{ and } b=\delta(t)
\]
in which case the field is said to  p-polarized and $u$ represent the $y$ component of the magnetic field.
\end{enumerate}
Often, for simplicity, it is assumed that $a$ and $b$ are independent of frequency and are real, bounded and uniformly positive piecewise continuously differentiable functions of position. However for
realistic materials both $a=a(\bfx,t)$ and $b=b(\bfx,t)$.  

Since the medium is assumed to be a grating, there is a period $L>0$ such that
\begin{equation*}
\left.\begin{array}{rcl}
a(x+L,z,t)&=&a(x,z,t)\\
b(x+L,z,t)&=&b(x,z,t)\end{array}\right\}\mbox{ for all }\mathbf{x}=(x,z)\in \mathbb{R}^2\mbox{ and }t\in \mathbb{R}.
\end{equation*}
In addition the grating is assumed to have a finite height $H$ such that
\begin{equation}
a=b=\delta\mbox{ for all }x\in\mathbb{R}\mbox{ and } z<0\mbox{ or } z>H.\label{ab}
\end{equation}
This assumption can easily be relaxed to allow for different materials above and below the cell (one of our examples features this). 
We postpone further discussion of the assumptions regarding these coefficients until we have introduced sufficient notation.

Later we will reduce the problem to a bounded domain called the ``unit cell'' defined by
\[
\Omega=(0,L)\times (0,H),
\]
and we will also need the unbounded strip 
\[
S=(0,L)\times\mathbb{R}.
\]
We assume that the total field $u$ is due to an incident plane wave $u^i$ propagating towards the bottom $y=0$ of the grating.  In particular
\[
u^i(\bfx,t)=f(t-\bfd\cdot\bfx/c), \quad \bfx\in\mathbb{R}^2,
\]
for some twice continuously differentiable function $f$, and  unit vector
\[
\bfd=\red{(d_1,d_2):=}({\rm{}cos}s\alpha,{\rm{}sin}\alpha),\quad 0<\alpha<\pi.
\]
So $\alpha=\pi/2$ gives a plane wave propagating up along the $z$-axis at normal incidence to the
grating. In general, by linearity, the total field can be written as
\[
u=u^i+u^s
\]
where $u^s$ is an unknown scattered field to be determined.

Notice that the incident field $u^i$ is not periodic in $x$ but instead
\begin{eqnarray}
u^i(x+L,z,t)&=&f(t-d_1(x+L)/c-\red{d_2}z/c)=f(t-d_1L/c-\bfd\cdot\bfx/c)\nonumber\\&=&u^i(x,z,t-d_1L/c)\label{rpc}
\end{eqnarray}
for any $x,z$ and $t$.  Thus we expect that the scattered field and hence the total field $u$ should have the same translation 
properties so we impose
\[
u(x+L,z,t)=u(x,z,t-d_1L/c) \mbox{ for all } x,z\mbox{ and } t.
\]
This is the time domain counterpart of quasi-periodicity in the frequency domain~\cite{wilcox_book}.

To avoid the retarded periodicity condition (\ref{rpc})  it is common \cite{veys93,mathis_phd} to change variables
for this problem and define
\[
w(x,z,t)=u(x,z,t+(x-L)d_1/c), \mbox{ and } w^i(x,z,t)=u^i(x,z,t+(x-L)d_1/c) \red{.}
\]
With this definition it is clear that $w$ and $w^i$ (and hence $w^s$ defined in the same way) is periodic in $x$ since
\begin{eqnarray*}
w(x+L,z,t)&=&u(x+L,z,t+xd_1/c)=u(x,z,t+(x-L)d_1/c)\\&=&w(x,z,t).
\end{eqnarray*}
The incident field becomes the \red{trivialy} $x$-periodic field \red{(independent of $x$)}
\begin{eqnarray}
w^i(x,z,t)&=&u^i(x,z,t+(x-L)d_1/c)=f(t-Ld_1/c-d_2z/c).\label{winc}
\end{eqnarray}
We assume the field in the unit cell is quiescent before $t=0$ so that $w^s=0$ for
$t<0$.  This requires that $w^i=0$ on $\Omega$ for $t<0$ or
\[
f(t-Ld_1/c-d_2z/c)=0
\]
for $t<0$ and $0<z<H$.  Assuming $d_1\geq 0$ and $d_2>0$ it suffices that $f(t)=0$ for $t<0$.

With the above change of variables \red{equation (\ref{wave})} become more complicated.  Setting $\bfe_1=(1,0)$ we have
\begin{eqnarray*}
\grad u&=&\grad w-\frac{d_1}{c}\frac{\partial w}{\partial t}\bfe_1,\\
\Delta u&=&\Delta w-2\frac{d_1}{c}\frac{\partial^2w}{\partial t\partial x}+\frac{d_1^2}{c^2}\frac{\partial^2w}{\partial t^2}.
\end{eqnarray*}
So since $\nabla\cdot a\grad u=a\Delta u+\grad a\cdot\grad u$ we conclude that $w$ satisfies
\begin{equation}
\left(\frac{b-ad_1^2}{c^2}\right)*\frac{\partial^2 w}{\partial t^2}=\nabla\cdot \red{(a*\grad w)}
-\frac{d_1}{c}\frac{\partial}{\partial x}\left(a*\frac{\partial w}{\partial t}\right)-\frac{d_1}{c}a*\frac{\partial^2w}{\partial t\partial x}
\label{weqn}
\end{equation}
for all $x,z$ and $t$.  The problem we wish to solve is to compute $w$ such that (\ref{weqn}) holds together with the splitting
\[
w=w^i+w^s\mbox{ in } S\times\mathbb{R},
\]
together with the initial conditions 
\[
w^s=\frac{\partial w^s}{\partial t}=0\mbox{ in }S\mbox{ at }t=0,
\]
and in addition $w$ is $L$-periodic in $x$.

The change of variables we have used above is well-known in the engineering literature
\cite{veys93} where an explicit finite difference time domain (FDTD) technique is used \red{for} discretization and  time stepping.  In this case the time step must be chosen depending on the angle of incidence. Some later authors \cite{ril08,jin_riley_book} have use implicit schemes to avoid stability restrictions. We shall use implicit methods here.
Another  interesting FDTD paper for gratings that discusses, amongst other things, locally refined grids is \cite{hol02}. The numerical analysis of several models of dispersive media are discussed in \cite{jli10,jli08} and in particular \cite{jli_book}, but these studies do not cover grating problems, and do not provide an analysis for a general class of problems.

Existence and uniqueness questions for gratings were studied in \cite{mathis_phd}.  In that thesis,
existence and uniqueness is proved using a weighted norm in time for frequency independent coefficients. This is a special case of our result which improves the norm and also allows frequency dependent coefficients.  We use the Laplace
transform as a tool and in addition prove error estimates for a limited class of time-stepping schemes.  We also provide preliminary numerical results verifying the temporal convergence rate of the scheme, as well as showing results for two frequency dependent 
models covered by our theory.

The layout of the paper is as follows.  In the next section we prove existence and uniqueness of a 
solution to the time domain problem.  We also give error estimates for \red{Backward Euler and Backward Differentiation Formula 2 (BDF2)
based discretization in time. Both these time stepping rules are implicit, the former being first order and the latter second order \cite{iserles}.} In Section \ref{space} we show how we discretized in space and truncated the problem using a domain decomposition strategy. In Section \ref{numer} we give three numerical examples: we first verify the predicted convergence rate in a simple case, then we give two examples of computations using standard frequency dependent coefficient models.  Finally in Section~\ref{concl} we 
put the study into perspective.

To simplify the presentation, for the remainder of this paper we will assume s-polarization so that $a=\delta$.  \red{The p-polarization case can be analyzed in a similar way.  In that case $b=\delta$ and we need to required that $1-d_1^2$ is strictly positive.  In addition, we need to ensure coercivity of the appropriate bilinear form.  The main assumption is that if $\hat{a}(s)$is the Laplace transform of $a$ with parameter $s$ (see upcoming (\ref{lapdef}))  and  if $\sigma=\Re(s)>0$ then there is a constant $C$ depending on $\sigma$ such that
\[
\Re(\overline{s}\hat{a}(s))\geq C.
\]
The corresponding assumption for s-polarization is given in detail Assumption \ref{A1}, and we would also need to assume the differentiability
conditions on $\hat{a}$.  Numerical examples of the p-polarized case are not investigated here.}

\section{Existence and uniqueness}
For theoretical purposes it is convenient to work with the scattered field.  So setting
\[
w=w^i+w^s
\]
and noting that $w^i$ satisfies (\ref{weqn}) with $a=b=\delta$ we have that $w^s$ satisfies
\begin{eqnarray}
\left(\frac{b-d_1^2\delta}{c^2}\right)*\frac{\partial^2 w^s}{\partial t^2}&=&\Delta w^s
-2\frac{d_1}{c}\frac{\partial^2w^s}{\partial t\partial x}+F\mbox{ in }S\times \mathbb{R}_+\label{wsteqn}\\
w^s&=&0\mbox{ in }S\mbox{ at }t=0\\
\frac{\partial w^s}{\partial t}&=&0\mbox{ in }S\mbox{ at }t=0\\
w^s(L,z,t)&=&w^s(0,z,t)\mbox{ for }t>0,\;z\in\mathbb{R},\label{ubc}\\
\frac{\partial w^s}{\partial x}(L,z,t)&=&\frac{\partial w^s}{\partial x}(0,z,t)\mbox{ for }t>0,\;z\in\mathbb{R},\label{diffbc}
\end{eqnarray}
where 
\[
F=\left(\frac{\delta(t)-b}{c^2}\right)*\frac{\partial^2 w^i}{\partial t^2}.
\]
Notice that, \red{by our definition of $H$ (see (\ref{ab})),} $F=0$ if $z>H$ or $z<0$ so  $F$  has support in $\Omega$.  Our assumptions
on $f$ also imply that provided $b$ is causal, $F=0$ in $\Omega$ for $t<0$.  We can extend $w^s$ by zero to time $t<0$ and
hence obtain a causal function defined for all $t$.

To analyze this problem we use the Laplace transform in time \cite{lub94,BHa86}.  For any sufficiently smooth
function $g=g(t)$  with at most exponential growth for large time, the Laplace transform is
$\hat{g}=\hat{g}(s)$ given by
\begin{equation}
\hat{g}(s)={\cal L}(g)(s)=\int_0^\infty g(t)\exp(-st)\,dt,
\label{lapdef}
\end{equation}
where we choose the transform variable to be $s=\sigma-i\omega$ for
$\sigma\in \mathbb{R}$ and $\sigma>0$, and $\omega\in\mathbb{R}$. Then because of our initial conditions
\[
{\cal L}\left(\frac{\partial^2 w^s}{\partial t^2}\right)=s^2\hat{w}^s\mbox{ and }
{\cal L}\left(\frac{\partial^2 w^s}{\partial t\partial x}\right)=s\frac{\partial \hat{w}^s}{\partial x}.
\]
In addition
\begin{eqnarray*}
\hat{w}^i(x,z)&=&\int_0^\infty\exp(-st)f(t-Ld_1/c-d_2z/c)\,ds\\
&=& \int_{-Ld_1/c-d_2z/c}^\infty \exp(-s(\tau+Ld_1/c+d_2z/c))f(\tau)\,d\tau\\
&=&\hat{f}(s)\exp(-sLd_1/c)\exp(-sd_2z/c).
\end{eqnarray*}
As expected this is a scaled plane wave in the Laplace domain \red{that is actually independent of $x$ (see equation (\ref{winc})).}

Formally taking the Laplace transform of (\ref{wsteqn}) we are lead to seek  $\hat{w}^s\in H^1_{\rm{}p}(S)$ that satisfies
\begin{eqnarray*}
s^2\left(\frac{\hat{b}-d_1^2}{c^2}\right)\hat{w}^s&=&\Delta \hat{w}^s
-2s\frac{d_1}{c}\frac{\partial\hat{w}^s}{\partial x}+\hat{F}\mbox{ in }S,\\
\hat{w}^s(L,z)&=&\hat{w}^s(0,z)\mbox{ for }y\in\mathbb{R},\\
\frac{\partial \hat{w}^s}{\partial x}(L,z)&=&\frac{\partial \hat{w}^s}{\partial x}(0,z)\mbox{ for }z\in\mathbb{R}.\end{eqnarray*}
Here $\hat{F}=s^2\red{(1-\hat{b})}\hat{w}^i/c^2$ and $\hat{b}$ is the Laplace transform of $b$.  

To formulate a variational problem for this system,  let
\[
H^1_{\rm{}p}(S)=\left\{ f\in H^1(S)\;|\;f(L,z)=f(0,z)\mbox{ for all }z\in\mathbb{R}\right\}.
\]
\red{(where the subscript p recalls the $x$-periodicity of the functions)} with the $s$-dependent norm
\[
\Vert\hat{w}^s\Vert_{H^1_{\rm{}p}(S)}^2= \int_{S}\left[|\grad \hat{w}^s|^2+\left(\frac{|s|^2}{c^2}\right)
|\hat{w}^s|^2\right]\,dA
\]
\red{where $dA=dx\,dz$.} We shall  require that $\hat{w}^s\in H^1_{\rm{}p}(S)$  and this requirement replaces a radiation condition. It holds because
$\Re(s)=\sigma>0$.

We can now write a Galerkin formulation for the Laplace transformed problem  as usual by multiplying by the complex conjugate of a  test function and integrating by parts (using the periodicity of the normal derivative to cancel terms on $x=0$ and $x=L$). We seek $\hat{w}^s\in H^1_{\rm{}p}(S)$ such that
\begin{equation}
a(\hat{w}^s,\xi)=\int_{S}\hat{F}\overline{\xi}\,dA\mbox{ for all }
\xi\in  H^1_{\rm{}p}(S),\label{wsweak}
\end{equation}
where the over-bar denotes complex conjugation and
\[
a(\hat{w}^s,\xi)=\int_{S}\left[\grad \hat{w}^s\cdot\grad \overline{\xi}+s^2\left(\frac{\hat{b}-d_1^2}{c^2}\right)
\hat{w}^s\overline{\xi}+2s\frac{d_1}{c}\frac{\partial \hat{w}^s}{\partial x}\overline{\xi}\right]\,dA.
\]
At this stage we need to specify our assumptions on the frequency dependent coefficient $\hat{b}$.
We assume
\begin{assumption}\label{A1}
The coefficient $\hat{b}=\hat{b}(\bfx,s)$ is piecewise continuously differentiable in $\bfx$. In addition \begin{enumerate}
\item For almost every $\bfx \in S$ the coefficient $\hat{b}$ is analytic in $s$ for $\Re(s)>\sigma_0>0$ for any $\sigma_0$, and bounded independent of $s$ (but the bound may depend on $\sigma_0$).
\item There is a constant $\gamma_0$ such that
\[
\Re(s(\hat{b}(\bfx,s)-d_1^2))>\sigma \gamma_0>0
\]
for $\Re(s)=\sigma>0$ and all $\bfx\in S$.
\item $\hat{b}(x,z,s)=1$ for $z<0$ or $z>H$ and all $x$, $s$.
\end{enumerate}
\end{assumption}
In Section \ref{freq_dep} we will give two important examples of a frequency dependent coefficient satisfying the above assumptions.

Now we can use the Lax-Milgram Lemma to prove existence of a solution. First we verify coercivity and continuity.
\begin{lemma} Suppose that   $\hat{b}$ satisfies the Assumption \ref{A1}, and $\Re(s)=\sigma>\sigma_0>0$ for some $\sigma_0$. Then
the sesquilinear form $a(\cdot,s\cdot)$ is coercive and bounded.  In particular for every $\hat{v}\in  H^1_{\rm{}p}(S)$
\[
|a(\hat{v},s\hat{v})|\geq\sigma\min(1,\gamma_0)\Vert\hat{v}\Vert^2_{H^1_{\rm{}p}(S)}.
\]
In addition there is a constant $C$  depending on $\sigma_0$ but  independent of $s$, $\hat{u},\hat{v}\in H^1_{\rm{}p}(S)$ such that
\[
|a(\hat{u},\hat{v})|\leq C\Vert\hat{u}\Vert_{H^1_{\rm{}p}(S)}
\Vert\hat{v}\Vert_{H^1_{\rm{}p}(S)}.
\]
\end{lemma}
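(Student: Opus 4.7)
The plan is to verify the two bounds by the standard test-function trick of using $s\hat v$ instead of $\hat v$, which is what the notation $a(\cdot,s\cdot)$ is signaling.

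For coercivity, I would compute $a(\hat v, s\hat v)$ term by term. Using $\overline{s\hat v}=\overline s\,\overline{\hat v}$ and $s\overline s=|s|^2$, the three pieces of $a(\hat v,s\hat v)$ are
\[
\overline s\!\int_S|\grad\hat v|^2\,dA,\qquad s|s|^2\!\int_S\frac{\hat b-d_1^2}{c^2}|\hat v|^2\,dA,\qquad 2|s|^2\frac{d_1}{c}\!\int_S\frac{\partial\hat v}{\partial x}\,\overline{\hat v}\,dA.
\]
The crucial observation, and the one slightly nontrivial step, is that the last integral is purely imaginary: integrating by parts in $x$, with the boundary contributions on $x=0$ and $x=L$ cancelling by the periodicity built into $H^1_{\rm p}(S)$, gives $\int_S (\partial_x\hat v)\overline{\hat v}\,dA=-\overline{\int_S(\partial_x\hat v)\overline{\hat v}\,dA}$. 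Since $2|s|^2 d_1/c$ is real, the whole cross term drops out of the real part. Taking the real part of what remains yields
\[
\Re\,a(\hat v,s\hat v)=\sigma\!\int_S|\grad\hat v|^2\,dA+\frac{|s|^2}{c^2}\!\int_S\Re\bigl(s(\hat b-d_1^2)\bigr)|\hat v|^2\,dA,
\]
and Assumption \ref{A1}(2) bounds the second integrand below by $\sigma\gamma_0$. Combining, $\Re\,a(\hat v,s\hat v)\ge \sigma\min(1,\gamma_0)\|\hat v\|_{H^1_{\rm p}(S)}^2$, and then $|a(\hat v,s\hat v)|\ge \Re\,a(\hat v,s\hat v)$ gives the stated coercivity inequality.

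For continuity, I would just apply Cauchy--Schwarz to each of the three integrals defining $a(\hat u,\hat v)$ and use the $s$-dependent norm to absorb the powers of $s$:
the gradient term is bounded by $\|\grad\hat u\|\|\grad\hat v\|\le\|\hat u\|_{H^1_{\rm p}(S)}\|\hat v\|_{H^1_{\rm p}(S)}$;
the zeroth-order term is bounded by $(|s|^2/c^2)(\|\hat b\|_\infty+d_1^2)\|\hat u\|\|\hat v\|$, and rewriting $(|s|/c)\|\hat u\|$ and $(|s|/c)\|\hat v\|$ as factors of the norm absorbs the $|s|^2/c^2$ cleanly, while Assumption \ref{A1}(1) provides the $\sigma_0$-dependent bound on $\hat b$;
the first-order cross term is bounded by $(2|s| d_1/c)\|\partial_x\hat u\|\|\hat v\|\le 2d_1\|\hat u\|_{H^1_{\rm p}(S)}\|\hat v\|_{H^1_{\rm p}(S)}$ by the same trick applied to one factor of $|s|/c$.

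The main obstacle, such as it is, is really just recognizing that the cross term from $\partial_x\hat v\,\overline{\hat v}$ is purely imaginary under the periodic boundary condition; everything else is bookkeeping with the $s$-weighted norm. That observation is also what forces us to test with $s\hat v$ rather than $\hat v$, since only then do the $|s|^2$ weights line up with the norm.
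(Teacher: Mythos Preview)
Your proposal is correct and follows essentially the same route as the paper: test with $s\hat v$, kill the cross term by noting (via integration by parts and periodicity) that $\int_S(\partial_x\hat v)\overline{\hat v}\,dA$ is purely imaginary, take real parts and invoke Assumption~\ref{A1}(2) for coercivity, then do Cauchy--Schwarz on each term for continuity. Your write-up is in fact slightly more explicit than the paper's in explaining how the $|s|$-weights in the norm absorb the powers of $s$ in the continuity estimate.
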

\begin{remark} The dependence of the constant in the boundedness estimate above on $\sigma_0$ comes from
the frequency dependence of $\hat{b}$.  If $\hat{b}$ is frequency independent, then the constant is independent of $\sigma_0$.
\end{remark}
\begin{proof}
 Using the by now standard trick
of Bamberger and Ha Duong \cite{BHa86} we choose $\xi=s\hat{v}$ and obtain
\[
a(\hat{v},s\hat{v})=\int_{S}\left[\overline{s}\,|\grad\hat{v}|^2+s|s|^2\left(\frac{\hat{b}-d_1^2}{c^2}\right)
|\hat{v}|^2+2|s|^2\frac{d_1}{c}\frac{\partial \hat{v}}{\partial x}\overline{\hat{v}}\right]\,dA.
\]
But since integration by parts in $x$ shows that
\[
\int_{S}\frac{\partial \hat{v}}{\partial x}\overline{\hat{v}}+\hat{v}\frac{\partial \overline{ \hat{v}}}{\partial x}\,dA=0
\]
we have 
\[
\Re\left(\int_{S}\frac{\partial \hat{v}}{\partial x}\overline{\hat{v}}\,dA\right)=0
\]
and
so we have proved coercivity because, using our assumption on $\hat{b}$,
\begin{eqnarray*}
\Re [ a(\hat{v},s\hat{v})]&=&\red{\int_{S}\Re\left[\overline{s}\,|\grad\hat{v}|^2+|s|^2s\left(\frac{\hat{b}-d_1^2}{c^2}\right)
|\hat{v}|^2\right]\,dA}\\
&\geq& \sigma\min(1,\gamma_0)\Vert\hat{v}\Vert^2_{H^1_{\rm{}p}(S)}.
\end{eqnarray*}

In addition $a(\cdot,\cdot)$ is bounded because for any $\hat{u},\hat{v}\in H^1_{\rm{}p}(S)$ we have
\begin{eqnarray*}
|a(\hat{u},\hat{v})|&\leq&\Vert \grad \hat{u}\Vert_{L^2(S)}\Vert \grad \hat{v}\Vert_{L^2(S)}+\frac{|s|^2}{c^2}\Vert \hat{ b}-d_1^2\Vert_{L^{\infty}(S)}\Vert \hat{u}\Vert_{L^2(S)}\Vert \hat{v}\Vert_{L^2(S)}\\&&\qquad +2|s|\frac{d_1}{c}\Vert\grad \hat{u}\Vert_{L^2(S)}\Vert \hat{v}\Vert_{L^2(S)}.
\end{eqnarray*}
Use of our assumption on the coefficient $\hat{b}$ and standard inequalities demonstrates the required continuity.
\end{proof}
Using the above lemma and the Lax-Milgram Lemma we have the following result
\begin{theorem}
Assume that $\hat{b}$ satisfies Assumption~\ref{A1}  and $\Re(s)=\sigma>0$.  Then problem (\ref{wsweak}) has a unique solution and 
\[
 \sigma\min(1,\gamma_0)\Vert \hat{w}^s\Vert_{H^1_{\rm{}p}(S)}\leq \frac{1}{c^2}\Vert \hat{b}-1\Vert_{L^\infty(S)}\Vert \hat{F}\Vert_{L^2(S)}
 \]
 \end{theorem}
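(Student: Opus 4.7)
The plan is to invoke Lax-Milgram on a rescaled form, exploiting the Bamberger--Ha Duong trick already used in the preceding lemma. Since the coercivity statement there is phrased in terms of $a(\hat v,s\hat v)$ rather than $a(\hat v,\hat v)$, I would introduce the modified sesquilinear form
\[
\tilde a(\hat u,\hat v):=a(\hat u,s\hat v)
\]
on $H^1_{\rm{}p}(S)$, which for fixed $s$ with $\Re(s)=\sigma>0$ is coercive with constant $\sigma\min(1,\gamma_0)$ and bounded with a constant proportional to $|s|$ by the preceding lemma. The substitution $\xi=s\hat v$ is a bijection of $H^1_{\rm{}p}(S)$ since $s\neq 0$, and it reduces (\ref{wsweak}) to the equivalent problem of finding $\hat w^s\in H^1_{\rm{}p}(S)$ with
\[
\tilde a(\hat w^s,\hat v)=\overline{s}\int_{S}\hat F\,\overline{\hat v}\,dA\quad\mbox{for all }\hat v\in H^1_{\rm{}p}(S).
\]
The right-hand side is an antilinear functional of $\hat v$ bounded by $|s|\,\Vert\hat F\Vert_{L^2(S)}\,\Vert\hat v\Vert_{L^2(S)}$, hence continuous on $H^1_{\rm{}p}(S)$. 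Lax-Milgram then delivers a unique $\hat w^s$, yielding existence and uniqueness for (\ref{wsweak}).

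For the a priori estimate I would test with $\hat v=\hat w^s$. Coercivity gives
\[
\sigma\min(1,\gamma_0)\Vert\hat w^s\Vert_{H^1_{\rm{}p}(S)}^2\le |\tilde a(\hat w^s,\hat w^s)|,
\]
and Cauchy-Schwarz on the right-hand side gives the upper bound $|s|\,\Vert\hat F\Vert_{L^2(S)}\,\Vert\hat w^s\Vert_{L^2(S)}$. Next, the definition of the $s$-dependent norm on $H^1_{\rm{}p}(S)$ implies $(|s|/c)\Vert\hat w^s\Vert_{L^2(S)}\le\Vert\hat w^s\Vert_{H^1_{\rm{}p}(S)}$, so that after dividing through by $\Vert\hat w^s\Vert_{H^1_{\rm{}p}(S)}$ one arrives at an estimate of the type claimed.

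The main obstacle I foresee is bookkeeping rather than conceptual: reconciling the precise constant in the statement, in particular the factor $\Vert\hat b-1\Vert_{L^\infty(S)}/c^2$, with the specific form $\hat F=s^2(1-\hat b)\hat w^i/c^2$ of the source and with the $|s|$ and $c$ factors picked up from the test function $s\hat v$ and the definition of the weighted norm. Beyond this accounting, the argument is the standard Bamberger--Ha Duong Lax-Milgram pattern once the previous coercivity and continuity lemma is in hand.
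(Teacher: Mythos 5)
Your proposal is essentially the paper's own argument: the paper offers no proof beyond citing the preceding coercivity/continuity lemma and Lax--Milgram, and your Bamberger--Ha~Duong substitution $\xi=s\hat v$ is precisely how that lemma is meant to be deployed, so the approach matches. The constant you could not reconcile is not a gap on your side: your computation yields the natural bound $\sigma\min(1,\gamma_0)\Vert\hat w^s\Vert_{H^1_{\rm p}(S)}\le c\,\Vert\hat F\Vert_{L^2(S)}$, whereas the factor $\Vert\hat b-1\Vert_{L^\infty(S)}/c^2$ appearing in the stated estimate double-counts what is already contained in $\hat F=s^2(1-\hat b)\hat w^i/c^2$ and appears to be a typographical slip in the theorem statement rather than something your argument should reproduce.
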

 
 This verifies the existence of the solution in the Laplace domain and this in turn  gives a time domain existence and uniqueness theorem.  Note that we need the condition $\Re(s(\hat{b}(\bfx)-d_1^2))\geq\sigma\gamma_0>0$ for all $\bfx$  in $S$. Since $d_1={\rm{}sin}\theta$ where $\theta$ is the angle of incidence, this may limit the magnitude of the angle of incidence.
  
Using Lubich's theory of convolution quadrature \cite{lub94}, and considering a finite time period $[0,T]$ for some $T>0$ we can now state the following existence and uniqueness result.  The usual space for this theory is
\[
H^m_0((0,T),H^1_{\rm{}p}(S))=\{g|_{(0,T)}\;|\; g\in H^m(\mathbb{R},H^1_{\rm{}p}(S)) \mbox{ with }g=0 \mbox{ for }t<0\},
\]
which places compatibility conditions on the data at $t=0$ if $m\geq 1$.
\begin{theorem}  Suppose $F\in H^m_0((0,T),L^2(S))$ for some $m\geq 0$.  Then there exists a unique weak solution $w^s\in H^m_0((0,T),H^1_{\rm{}p}(S))$ of the time domain problem
(\ref{wsteqn})-(\ref{diffbc}).\end{theorem}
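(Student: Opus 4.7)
The plan is to obtain the time-domain result by inverse Laplace transform, applying Lubich's operator-valued inversion theorem \cite{lub94}. The Laplace-domain theorem furnishes, for each $s$ with $\Re(s)=\sigma>\sigma_0>0$, a unique $\hat{w}^s(s)\in H^1_{\rm{}p}(S)$ satisfying
\[
\|\hat{w}^s(s)\|_{H^1_{\rm{}p}(S)}\leq \frac{C(\sigma_0)}{\sigma}\|\hat{F}(s)\|_{L^2(S)},
\]
so I define the solution operator $K(s)\colon L^2(S)\to H^1_{\rm{}p}(S)$ by $K(s)\hat{F}=\hat{w}^s$. The two ingredients required by Lubich's theorem are that $s\mapsto K(s)$ is holomorphic on the half-plane $\Re(s)>\sigma_0$, and that it is polynomially bounded there.

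Analyticity follows from Assumption~\ref{A1}(1), which gives holomorphy of $\hat{b}(\mathbf{x},\cdot)$, hence analytic dependence of the sesquilinear form $a(\cdot,\cdot)$ on $s$; combined with the uniform coercivity on $\{\Re(s)\geq\sigma_0\}$ established in the preceding lemma, the standard fact that the inverse of an analytic family of boundedly invertible operators is itself analytic yields a holomorphic $K(\cdot)$. The displayed bound gives $\|K(s)\|_{L^2\to H^1_{\rm{}p}}\leq C(\sigma_0)/\sigma$, which is uniformly bounded on any half-plane $\Re(s)\geq\sigma_0$ and thus is polynomial growth of order zero in $|s|$. Lubich's theorem then implies that $F\mapsto w^s:=\mathcal{L}^{-1}(K(\cdot)\hat{F}(\cdot))$ extends to a bounded linear map from $H^m_0(\mathbb{R}_+,L^2(S))$ into $H^m_0(\mathbb{R}_+,H^1_{\rm{}p}(S))$ for every $m\geq 0$; restriction to $(0,T)$ provides a weak solution with the claimed regularity. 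Uniqueness is then obtained by Laplace-transforming the difference of two such solutions (its Laplace transform exists because of the exponential bound built into $H^m_0$) and invoking uniqueness in the Laplace domain.

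The main obstacle is verifying that the Lubich convolution really solves (\ref{wsteqn})--(\ref{diffbc}) in a distributional sense with the correct initial and boundary data. In the Laplace domain $K(s)$ satisfies
\[
s^2c^{-2}(\hat{b}-d_1^2)K(s)\hat{F}-\Delta K(s)\hat{F}+2sc^{-1}d_1\partial_x K(s)\hat{F}=\hat{F},
\]
and term-by-term inverse transformation recovers the PDE; the exchange of $\partial_t$ with $\mathcal{L}^{-1}$ is legitimate because of the uniform-in-$s$ bounds on $K(s)$ together with the regularity and causality of $F\in H^m_0$. Periodicity passes from $K(s)\hat{F}\in H^1_{\rm{}p}(S)$ to $w^s$, while the vanishing of $w^s$ and $\partial_tw^s$ at $t=0$ is encoded in the membership $w^s\in H^m_0((0,T),H^1_{\rm{}p}(S))$ (together with a trace argument when $m\geq 1$ to make sense of the initial conditions pointwise in time).
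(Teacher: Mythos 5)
Your proposal is correct and follows essentially the same route as the paper: both define the solution operator $K(s)$ from the Laplace-domain well-posedness result, observe that it is analytic and uniformly bounded on a half-plane $\Re(s)>\sigma_0$, and invoke Lubich's inversion lemma (via Parseval/Plancherel on the vertical contour $\Re(s)=\sigma$) to transfer existence, uniqueness and the $H^m_0$ bound back to the time domain. Your write-up simply spells out details (analyticity of the operator family, verification that the inverse transform solves the PDE distributionally) that the paper leaves implicit in its citation of \cite[Lemma 2.1]{lub94}.
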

\begin{proof} This is an application of a slight generalization of \cite[Lemma 2.1]{lub94}
(for similar analysis see Section 2.2 of the same paper).  In the transform domain we can write
\[
\hat{w}^s=K(s) \hat{F}
\]
and the solution operator $K(s):L^2(\Omega)\to H^1_{\rm{}p}(\Omega)$ satisfies, for any fixed $\sigma_0>0$ and all $s$ with $\Re(s)>\sigma_0$
\[
\Vert K(s)\Vert_{L^2:H^1}\leq C
\]
for some $C$ depending on $\sigma_0$ but independent of $s$.  Here $\Vert \cdot\Vert_{L^2:H^1}$ is the operator norm for maps from $L^2(S)\to H^1_{\rm{}p}(S)$.
Parseval's theorem gives, using the contour $\Re(s)=\sigma>\sigma_0$,
\[
\Vert \exp(-\sigma t)w^s\Vert_{H^m((0,T),H^1_{\rm{}p}(S))}\leq C
\Vert \exp(-\sigma t)F\Vert_{H^m((0,T),L^2(S))}.
\]
\end{proof}
To obtain results for time discretization we can appeal to Lubich's theory of Convolution Quadrature~\cite{lub94}.  This is simplest for a multistep scheme which we now describe (an implicit Runge-Kutta scheme could also be used but is beyond the scope of the paper).
Let $\Delta t>0$ denote the timestep and let $t_n=n\Delta t$, $n\geq 0$.  Given $g(t,y)$,  consider the differential equation $y'=g(t,y)$  for $t>0$ with $y(0)=0$, then a general $k$-step multistep scheme applied to this ordinary differential equation is to find $\{y_n\}_{n=0}^\infty$ such that
\[
\sum_{j=0}^k\alpha_jy_{n-j}=\Delta t\sum_{j=0}^k\beta_jg(t_{n-j},y_{n-j})
\]
where we assume compatible initial data and set $y_j=0$ for $j\leq 0$.  The coefficients $\{\alpha_j,\beta_j\}_{j=0}^k$ define the method and we assume that $\alpha_0/\beta_0>0$.  Then following Lubich~\cite{lub94}, let
\[
\gamma(\zeta)=\frac{\sum_{j=0}^k\alpha_j\zeta^j}{\sum_{j=0}^k\beta_j\zeta^j}, \quad\zeta\in\mathbb{C}.
\]
 Lubich shows Convolution Quadrature time stepping is equivalent to the following parameterized problem.  Let
\[
W^s(\bfx)=\sum_{j=0}w^s_n(\bfx)\zeta^n
\]
for $\zeta\in\mathbb{C}$ small enough.  Here we \red{shall prove that $w^s_n$ converges to $w^s(\cdot,t_n)$ as $\Delta t\to 0$,} and we take $w^s_n=0$ for $n\leq 0$.  Then 
 $W^s\in H^1_{\rm{}p}(S)$ satisfies
 \begin{eqnarray}
a_{\Delta t}(W^s,\xi)=\int_{S}\hat{F}_{\Delta t}\overline{\xi}\,dA\mbox{ for all }
\xi\in  H^1_{\rm{}p}(S),\label{wsweakz}
\end{eqnarray}
for all $|\zeta|<1$, where
\begin{eqnarray*}
a_{\Delta t}(W^s,\xi)&=&\int_{S}\left[\grad W^s\cdot\grad \overline{\xi}+\left(\frac{\gamma(\zeta)}{\Delta t}\right)^2\left(\frac{b_{\Delta t}-d_1^2}{c^2}\right)
W^s\overline{\xi}\right.\\&&\qquad\left. +2\frac{\gamma(\zeta)}{\Delta t}\frac{d_1}{c}\frac{\partial W^s}{\partial x}\overline{\xi}\right]\,dA,
\end{eqnarray*}
and $b_{\Delta t}$ and $F_{\Delta t}$ are obtained from $\hat{b}$ and $\hat{F}$ by replacing the transform parameter $s$ by $\gamma(\zeta)/\Delta t$.

To obtain a time stepping scheme it then suffices to equate terms in $\zeta^n$ on the left and right hand side of (\ref{wsweakz}) to obtain equations
for $w_n^s$ in terms of previous values.  When $\hat{b}$ is frequency independent, we obtain the usual multistep update formula that can be derived alternatively by applying the multistep method to the standard weak form of (\ref{wsteqn}) (however the approach we have given will provide an error analysis).  

Obviously this scheme is not yet computable because $S$ is unbounded.  One possibility
is to rewrite ({\ref{wsweakz}) by truncating $S$ using auxiliary boundaries above and below the grating, then using an integral equation or Dirichlet-to-Neumann map on the auxiliary boundary to close the system.  We discuss an approach similar to the Dirichet-to-Neumann map approach based on upwind transmission conditions in the next section.  

The theory of convolution quadrature provides an error estimate for this problem.  Let $(\partial_{\Delta t} W^s)_n$ denote the coefficient of $\zeta^n$
in the expansion of $\gamma(\zeta)W^s/\Delta t$.  This is the finite difference approximation of the time derivative corresponding to the 
given multistep method.  We have the following theorem:

\begin{theorem}\label{tm_conv}
Suppose the multi-step method is A-stable, order $p$ convergent and that $\gamma(\zeta)$ has no poles on the unit circle.  In addition suppose $b$ satisfies Assumption \ref{A1},  that $m=p+2$ and that $\partial^j F/\partial t^j=0$ at $t=0$ for $j=0,1,\cdots,m-1$. Then
for $0\leq t_n\leq T$ we have
\begin{eqnarray*}
&&\Vert \nabla (w^s(.,t_n)-w^s_n)\Vert_{L^2(\Omega)}+\frac{1}{c}\Vert \frac{\partial}{\partial t} w^s(.,t_n)-(\partial_{\Delta t} W^s)_n)\Vert_{L^2(\Omega)}\\
&\leq &C(\Delta t)^p\int_0^t\Vert\frac{\partial^m F}{\partial t^m}(.,\tau)\Vert_{L^2(\Omega)}\,d\tau
\end{eqnarray*}
where $C$ is independent of $\Delta t$, $w^s$ and the discrete solution, but depends on $T$.
\end{theorem}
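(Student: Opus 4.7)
The plan is to view the problem abstractly as the mapping $\hat{F} \mapsto \hat{w}^s = K(s)\hat{F}$ and apply Lubich's general convergence theorem for convolution quadrature (see \cite[Theorem 3.1 and Section 3]{lub94}) to this operator. First I would collect what is already known about $K(s)$. From the preceding existence theorem we have $K(s) : L^2(S) \to H^1_{\rm p}(S)$ with the $s$-dependent norm on $H^1_{\rm p}(S)$, and the bound
\[
\sigma\min(1,\gamma_0)\,\Vert K(s)\hat{F}\Vert_{H^1_{\rm p}(S)} \leq \frac{1}{c^2}\Vert \hat{b}-1\Vert_{L^\infty(S)}\Vert \hat{F}\Vert_{L^2(S)},
\]
which, by Assumption~\ref{A1}(1), is uniform in $s$ for $\Re(s)\geq \sigma_0$. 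Assumption~\ref{A1}(1) further ensures that $K(s)$ depends analytically on $s$ in this half-plane. In particular $K(s)$ satisfies a polynomial (in fact bounded) growth estimate $\Vert K(s)\Vert_{L^2\to H^1_{\rm p}} \leq C(\sigma_0)$ of the form required by Lubich's abstract hypotheses.

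Next I would invoke Lubich's CQ convergence theorem. The hypotheses on the multistep scheme in the theorem statement (A-stability, order $p$, no poles of $\gamma(\zeta)$ on the unit circle) are exactly the ones needed for the CQ error analysis, while the hypothesis $F\in H^m_0((0,T),L^2(S))$ with $m=p+2$ and vanishing of $\partial_t^j F$ at $t=0$ for $j<m$ supplies the temporal regularity and compatibility that Lubich's estimate requires on the data. Applied to the operator $K(s)$, Lubich's estimate delivers
\[
\Vert w^s(\cdot,t_n)-w^s_n\Vert_{H^1_{\rm p}(S), s=1/\Delta t}
\leq C(\Delta t)^p \int_0^{t_n}\Bigl\Vert \frac{\partial^m F}{\partial t^m}(\cdot,\tau)\Bigr\Vert_{L^2(S)}\,d\tau,
\]
where the left-hand side is measured in the $s$-dependent graph norm that naturally pairs with the CQ finite difference operator $\partial_{\Delta t}$.

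Finally I would unpack the left-hand side. The norm on $H^1_{\rm p}(S)$ used throughout is
\[
\Vert \hat{v}\Vert^2_{H^1_{\rm p}(S)} = \int_S\Bigl[|\grad \hat{v}|^2 + \frac{|s|^2}{c^2}|\hat{v}|^2\Bigr]\,dA,
\]
so the two summands in the Laplace domain correspond, via the correspondence $s\hat{v} \leftrightarrow \partial_t v$ (and $\gamma(\zeta)/\Delta t \leftrightarrow \partial_{\Delta t}$ at the discrete level), precisely to $\Vert \grad(w^s-w^s_n)\Vert_{L^2}$ and $(1/c)\Vert \partial_t w^s - (\partial_{\Delta t} W^s)_n\Vert_{L^2}$. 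Restricting from $S$ to $\Omega$ only decreases the norm, which gives the stated estimate on $\Omega$.

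The main technical obstacle is making the translation between the Laplace-domain graph norm and the time-domain expression on the left-hand side of the conclusion rigorous: one must check that Lubich's CQ error bound can indeed be stated in the $s$-dependent $H^1_{\rm p}(S)$ norm so that both the spatial gradient term and the term involving the discrete time derivative $(\partial_{\Delta t} W^s)_n$ are controlled simultaneously. This is the place where the coercivity constant $\sigma\min(1,\gamma_0)$ enters, and where one must observe that the uniform $L^2\to H^1_{\rm p}$ operator bound (independent of $|s|$ for $\Re(s)\geq \sigma_0$) is what makes the CQ exponent $\mu=0$ and thus leads to an error of order $(\Delta t)^p$ with temporal regularity index $m=p+2$, as asserted.
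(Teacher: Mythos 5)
Your proposal is correct and follows essentially the same route as the paper, which simply cites Lubich's Theorem 3.1 (and the discussion before Corollary 4.2 of \cite{lub94}); you merely make explicit the uniform $L^2\to H^1_{\rm p}$ bound on $K(s)$ from the preceding existence theorem and the translation of the $s$-dependent graph norm back into the gradient and discrete-time-derivative terms. Your fleshed-out version is a faithful expansion of the one-line proof the authors give.
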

\begin{remark} The canonical examples of suitable time stepping schemes that satisfy the requirements of the theorem are backward Euler ($p=1$) and BDF2 ($p=2$).  In the latter case $m=4$ and we need $F$ to have $4$ weak derivatives in time and satisfy the compatibility conditions
$\partial^j F/\partial t^j=0$ at $t=0$ for $j=0,1,\cdots,3$.
\end{remark}

\begin{proof}  The proof is a direct consequence of Theorem 3.1 (see also the discussion before Corollary 4.2) of~\cite{lub94}.
\end{proof}

\section{Discretization in space}\label{space}
We can derive a time stepping scheme by truncating (\ref{wsweakz}}) using integral equations or the Dirichlet-to-Neumann (DtN) map and then equating terms in $\zeta^n$.  However in order to demonstrate the viability of the approach we will use the Laplace domain approach of Banjai and Sauter \cite{Banjai_2008}.  This requires to solve the Laplace domain problem for several
different choices of the transform parameter $s$.  The algorithm is exactly as in Banjai and Sauter's paper and so we only give details of the Laplace domain problem that we solve. \red{As pointed out by Banjai and Sauter, this approach is trivially parallelizable, although we have not investigated this or other algorithmic improvements here. This can help improve the elapsed time for running the algorithm.}
 
First we make a minor change and will solve for the total field in $\Omega$ (this avoids evaluating $F$ everywhere in the grating, but is less convenient for analysis).
Let $c$ denote the speed of light below the grating for \red{$z<0$}.    
The $x$ periodic total field $\hw=\hw(x,z)$ satisfies
\begin{equation}
\Delta \hw-2\frac{s}{c} d_1\hw_x-\frac{s^2}{c^2}(\hat{b}-d_1^2)\hw=0\mbox{ in }\Omega,
\label{pde:helmholtz}
\end{equation}

To accommodate one of our examples, we assume a slight generalization of the problem discussed so far.  We assume that $\hat{b}=1$ for $z<0$ and $\hat{b}=\hat{b}_1$  for $z>H$ where $\hat{b}_1$ is spatially constant but could be frequency dependent and so depend on $s$.
In the region $\Omega$ we  have, in general, $\hat{b}\not=1$.  

For $z<0$ we see that $\hw$ satisfies
\begin{equation}
\Delta \hw-2\frac{s}{c}d_1 \hw_x-\frac{s^2}{c^2}(1-d_1^2)\hw=0.
\label{pde:below}
\end{equation}
This field can be decomposed into an incident field given by
\[
\hat{w}^{i}(x,z)=\exp(-s\frac{d_2}{c} z) \,\exp(-s\frac{Ld_1}{c})\, \hat{f}(s)
\]
and scattered field as before so that $\hw=\hw^i+\hw^s$. The scattered field also solves the above differential equation for $z<0$ or $z>H$ and we now derive an equation for the scattered field for $z<0$.  Since $\hw^s$  is periodic
\[
\hw^s(x,z)=\sum_{n\in\mathbb{N}}\tilde{w}_n^s(z)\exp(i2\pi nx/L)
\]
for suitable Fourier coefficients $\{\tilde{w}_n^s\}_{n=0}^\infty$.
Substituting this expansion into (\ref{pde:below}) gives the condition
\[
\red{\frac{d^2}{dz^2}}\tilde{w}_n^s-\left(\frac{s^2}{c^2}+\left(2n\pi/L+i\frac{s}{c}d_1\right)^2\right)\tilde{w}_n^s=0,\quad \red{y<0}.
\]
Let $\kappa_n^s$ be defined by
\[
(\kappa_n^s)^2=\left(\frac{s}{c}\right)^2\left(1+\left(2n\pi c/(Ls)+id_1\right)^2\right)
\]
  We need to
choose the signs of the square root so that we have a decaying solution as $z\to-\infty$.  Then
\begin{equation}
\hw^s(x,z)=\sum_{n\in\mathbb{N}}\hat{w}_n^s \exp(i2\pi nx/L)\exp(\kappa_n^sz), \mbox{ for }z<0.
\label{us:expand}
\end{equation}
for suitable expansion coefficients $\hat{w}_n^s$ and where $\Re(\kappa_n^s)>0$.

For $z>H$,  the transmitted total wave $\hw^t$ satisfies
\begin{equation}
\Delta \hw^t-2\frac{s}{c}d_1 \hw^t_x-\frac{s^2}{c^2}(\hat{b}_1-d_1^2)\hw^t=0
\label{pde:above}
\end{equation}
There is no incident wave and the wave is only transmitted.
Proceeding as before we define
$\kappa_n^t$ by
\[
(\kappa_n^t)^2=\left(\frac{s}{c}\right)^2\left(\hat{b}_1+\left(\frac{2n\pi c}{Ls}+id_1\right)^2\right)
\]
where we  choose again $\Re(\kappa_n^t)>0$.
The transmitted field for $z>H$ is given by
\begin{equation}
\hw^t(x,z)=\sum_{n\in\mathbb{N}}\hat{w}_n^t \exp(i2\pi nx/L)\exp(-\kappa_n^t(z-H)), \mbox{ for }z>H\geq 0
\label{ut:expand}
\end{equation}
for suitable expansion coefficients $\hat{w}_n^t$.

The fields $\hw^s$ for $z<0$, $\hw^t$ for $z>H$ and $\hw$ for $0<z<H$ are related by
continuity conditions across the interfaces at $z=0$ and $y=H$.  At $z=0$ we need
\[
\hw(x,0)=\hw^i(x,0)+w^s(x,0)\mbox{ and } \hw_z(x,0)=\hw_z^i(x,0)+\hw_z^s(x,0),\quad 0<x<L.
\]
At the upper interface $z=H$ we need
\[
\hw(x,H)=\hw^t(x,H)\mbox{ and } \hw_z(x,H)=\hw_z^t(x,H),\quad 0<x<L.
\]

To formulate a problem suitable for spatial discretization we have adopted the one-way wave equation or characteristic equation approach
of \cite{HW_book} which is based on impedance boundary conditions (see also the UWVF \cite{HMM06} and for strongly related methods \cite{gander15,gan07}).

Let $\eta $ be a positive constant at our disposal (in fact we choose $\eta=1$). Given $\lambda_0^-\in L^2(0,L)$ and $\lambda_H^-\in L^2(0,L)$ define $\hw=\hw(\lambda_0^-,\lambda_H^-)\in H^1_{\rm{}p}(\Omega)$ by
\begin{eqnarray*}
\Delta \hw-2\frac{s}{c}d_1 w_x-\frac{s^2}{c^2}(b-d_1^2)w&=&0 \mbox{ in }\Omega,\\
\frac{\partial \hw}{\partial z}+\frac{s}{c}\eta \hw&=&\lambda_H^-\mbox{ for }z=H,\,0<x<L,\\
-\frac{\partial \hw}{\partial z}+\frac{s}{c} \eta \hw&=&\lambda_0^-\mbox{ for }z=0,\,0<x<L.
\end{eqnarray*}
This is easily written as the variational problem of seeking $\hw\in H^1_{\rm{}p}(\Omega)$ such that
\begin{equation}
B(\hw,v)=f(v)\mbox{ for all } v\in H^1_p(\Omega)\label{Bprob}
\end{equation}
where
\begin{eqnarray*}
B(\hw,v)&=&\int_\Omega\left\{\grad \hw\cdot\grad\overline{v}+2\frac{s}{c}d_1\hw_x\overline{v}+\frac{s^2}{c^2}(b-d_1^2)w\overline{v}\right\}\,dA\\&&
+\int_{\Sigma_H} \frac{s}{c}\eta \hw\overline{v}\,ds+\int_{\Sigma_0} \frac{s}{c}\eta \hw\overline{v}\,ds.
\end{eqnarray*}
Here $\Sigma_H=\{(x,H)\;|\;0<x<L\}$ while
$\Sigma_0=\{(x,0)\;|\;0<x<L\}$, and 
\[
f(v)=\int_{\Sigma_H}\lambda_H^-\overline{v}\,ds+\int_{\Sigma_0}\lambda_0^-\overline{v}\,ds.
\]

For $z>H$ define $\hw^t(\lambda_H^+)$ by requiring that
\[
\frac{\partial \hw^t}{\partial z}-\frac{s}{c} \eta \hw^t=\lambda_H^+\mbox{ for }z=H,\,0<x<L,
\]
together with the expansion (\ref{ut:expand}).
Using a trigonometric expansion (noting the periodicity of the solution and its normal derivative)
\[
\lambda_H^+=\sum_{n\in \mathbb{N}}\lambda^+_{H,n}\exp(i2\pi nx/L)
\]
then the Fourier coefficients of the transmitted field are
\[
\hw_n^t=-\lambda^+_{H,n}/(\kappa_n^t+s\eta/c),\quad n\in\mathbb{Z},
\]
and we can see that this field is aways well defined since $\Re s >0$ and $\Re(\kappa_n^t)>0$.
Define $F_H(\lambda_H^+)$ by
\begin{eqnarray*}
F_H(\lambda_H^+) &=& \frac{\partial \hw^t}{\partial z}+\frac{s}{c} \eta \hw^t \mbox{ on }\Sigma_H\\
&=&\sum_{n\in\mathbb{N}}\lambda^+_{H,n}\left(\frac{ \kappa_n^t -(s/c)\eta }{\kappa_n^t+(s/c)\eta}\right)\exp(i2\pi nx/L).
\end{eqnarray*}

Similarly for $z<0$ define $\hw^s(\lambda_0^+)$ by requiring that
\[
-\frac{\partial \hw^s}{\partial z}-\frac{s}{c}\eta \hw^s=\lambda_0^+\mbox{ for }z=0,\,0<x<L,
\]
together with the expansion (\ref{us:expand}).
Suppose 
\[
\lambda_0^+=\sum_{n\in \mathbb{N}}\lambda^+_{0,n}\exp(i2\pi nx/L)
\]
then the $n$th Fourier coefficient of the scattered field is
\[
\hw_n^s=-\lambda^+_{0,n}/(\kappa_n^s+(s/c)\eta).
\]
Define $F_0(\lambda_0^+)$ by
\begin{eqnarray*}
F_0(\lambda_0^+) &=& -\frac{\partial \hw^s}{\partial z}+\frac{s}{c} \eta \hw^s \mbox{ on }\Sigma_0\\
&=&\sum_{n\in\mathbb{N}}\lambda^+_{0,n}\left(\frac{ {\kappa_n^s-(s/c)\eta} }{\kappa_n^s+(s/c)\eta}\right)\exp(i2\pi nx/L).
\end{eqnarray*}

It remains to derive equations for $\lambda_H^{\pm}$ and $\lambda_0^{\pm}$.  This is done by enforcing the transmission conditions in impedance form.  At $z=H$ we require
\begin{eqnarray*}
\frac{\partial \hw}{\partial z}+\frac{s}{c} \eta \hw&=&\frac{\partial \hw^t}{\partial z}+\frac{s}{c}\eta \hw^t\mbox{ at }z=H,\\
\frac{\partial \hw}{\partial z}-\frac{s}{c} \eta \hw&=&\frac{\partial \hw^t}{\partial z}-\frac{s}{c}\eta \hw^t\mbox{ at }z=H.
\end{eqnarray*}
Writing these equations in terms of the unknown functions,
\begin{eqnarray*}
\lambda_H^--2\frac{s}{c} \eta \hw(\lambda_0^-,\lambda_H^-)&=&\lambda_H^+,\\
\lambda_H^-&=&F_H(\lambda_H^+),
\end{eqnarray*}
where we have avoided computing the normal derivative of $u$ by writing 
\[
\frac{\partial \hw}{\partial z}=\lambda_H^--\frac{s}{c} \eta \hw.
\]

The same process can be applied at $z=0$.  We require that
\begin{eqnarray*}
\frac{\partial \hw}{\partial z}+ik \eta u&=&\frac{\partial (\hw^s+\hw^i)}{\partial z}+ik\eta (\hw^s+\hw^i)\mbox{ at }y=0,\\
\frac{\partial \hw}{\partial z}-ik \eta \hw&=&\frac{\partial (\hw^s+\hw^i)}{\partial z}-ik\eta (\hw^s+\hw^i)\mbox{ at }y=0.
\end{eqnarray*}
Let
\[
f_-=-\frac{\partial \hw^i}{\partial z}+\frac{s}{c}\eta \hw^i\mbox{ and }
f_+= -\frac{\partial \hw^i}{\partial z}-\frac{s}{c}\eta \hw^i\mbox{ at }z=0.
\]
This gives the equations
\begin{eqnarray*}
-\lambda_0^-&=&-F_0(\lambda_0^+)-f_-,\\
-\lambda_0^-+2\frac{s}{c}\eta \hw(\lambda_0^-,\lambda_H^-)&=&-\lambda_0^+-f_+.
\end{eqnarray*}
In summary we must find $\lambda_H^{\pm}\in L_2(\Sigma_H)$ and $\lambda_0^{\pm}\in L_2(\Sigma_0)$ such that
\begin{eqnarray*}
\lambda_H^--2\frac{s}{c} \eta \hw(\lambda_0^-,\lambda_H^-)&=&\lambda_H^+,\\
\lambda_H^-&=&F_H(\lambda_H^+),\\
\lambda_0^-&=&F_0(\lambda_0^+)+f_-,\\
\lambda_0^--2\frac{s}{c}\eta \hw(\lambda_0^-,\lambda_H^-)&=&\lambda_0^++f_+.
\end{eqnarray*}

In order to discretize the problem, we expand all boundary functions as a finite Fourier series:
\begin{eqnarray*}
\lambda_H^{\pm,N}&=&\sum_{n=-N}^N\lambda^\pm_{H,n}\exp(i2\pi nx/L),\\
\lambda_0^{\pm,N}&=&\sum_{n=-N}^N\lambda^\pm_{0,n}\exp(i2\pi nx/L),
\end{eqnarray*}
and for ease of notation define $\psi^{H}_n(x)=\exp(i2\pi nx/L)|_{\Sigma_H}$ and $\psi^{0}_n(x)=\exp(i2\pi nx/L)|_{\Sigma_0}$. Then using the inner products
\[
\langle u,v\rangle_H=\int_{\Sigma_H}u\overline{v}\,ds,\quad \langle u,v\rangle_0=\int_{\Sigma_0}u\overline{v}\,ds,
\]
we have the discrete system
\begin{eqnarray*}
\langle \lambda_H^{-,N},\psi_p^H\rangle_H-2\frac{s}{c} \eta \langle \hw(\lambda_0^{-,N},\lambda_H^{-,N}),\psi_p^H\rangle_H-\langle\lambda_H^{+,N},\psi_p^H\rangle_H&=&0,\quad -N\leq p\leq N,\\
\langle \lambda_H^{-,N},\psi^H_q\rangle_H-\langle F_H(\lambda_H^{+,N}),\psi_q^h\rangle_H&=&0,\quad -N\leq q\leq N,\\
\langle \lambda_0^{-,N},\psi_r^0\rangle_0-\langle F_0(\lambda_0^{+,N}),\psi_r^0\rangle_0&=&\langle f_-,\psi_r^0\rangle_0,\quad -N\leq r\leq N,\\
\langle\lambda_0^-,\psi_s^0\rangle_0-2\frac{s}{c}\eta \langle \hw(\lambda_0^-,\lambda_H^-),\psi_s^0\rangle_0-\langle\lambda_0^+,\psi_s^0\rangle_0&=&\langle f_+,\psi_s^0\rangle_0,\quad -N\leq s\leq N.
\end{eqnarray*}
In our calculations we use a finite element approximation to $w$ based on standard quadrilateral elements and continuous mapped piecewise bilinear functions. The above system is solved using deal.II~\cite{dealII82}.

\red{In the upcoming calculations we can choose $N$ to be relatively small (in fact $N=10$ in the first experiment) because we expect only a few propagating modes in the frequency domain.  Then $N$ needs to be chosen to include these modes and a few evanescent modes in addition.}

\section{Numerical Results}\label{numer}
\subsection{Quantification of the error}
We start with a simple problem with a known exact solution to test convergence. We fix $c$ and suppose $\epsilon=\epsilon_-\delta(t)$ for $z<h_i$ and $\epsilon=\epsilon_+\delta(t)$ for $z>h_i$ for some fixed interface height $h_i$ with $0<h_i<H$.  Consider an incident field
at normal incidence (so $d_1=0$) defined by
\[
u^i(x,z,t)=f\left(-\frac{\sqrt{\epsilon_+}}{c}(z-h_i)+t\right),\quad z<h_i,
\]
where $f$ is a given function.
Then the scattered field for  $z<h_i$ will be
\[
u^s(x,z,t)=g_s\left(\frac{\sqrt{\epsilon_+}}{c}(z-h_i)+t\right)
\]
for some function $g_s$ and the transmitted wave in $z>h_i$ is
\[
u^t(x,z,t)=g_t\left(-\frac{\sqrt{\epsilon_-}}{c}(z-h_i)+t\right)
\]
for some function $g_t$.
Continuity of the fields at $z=h_i$ implies
\begin{equation}
f(t)+g_s(t)=g_t(t)\mbox{ for all }t.\label{fgh}
\end{equation}
Continuity of the normal derivative implies
\[
\frac{\sqrt{\epsilon_+}}{c}f'(t)-\frac{\sqrt{\epsilon_+}}{c}g_s'(t)=\frac{\sqrt{\epsilon_-}}{c}g_t'(t)
\]
Hence, integrating this expression and using causality,
\[
-\sqrt{\epsilon_+}f(t)+\sqrt{\epsilon_+}g_s(t)=-\sqrt{\epsilon_-}g_t(t).
\]
Using (\ref{fgh})
\[
g_s(t)=\frac{\sqrt{\epsilon_+}-\sqrt{\epsilon_-}}{\sqrt{\epsilon_+}+\sqrt{\epsilon_-}}f(t)
\]
and
 \[
g_t(t)=f(t)+g_s(t)=\frac{2\sqrt{\epsilon_+}}{\sqrt{\epsilon_+}+\sqrt{\epsilon_-}}f(t).
\]
We choose $f(t)$ to be sufficiently smooth for our convergence theorem to hold and to vanish for $t<0$, and in particular for parameters $m$,  $\alpha_{\rm{}inc}>0$ and $\beta_{\rm{}inc}$ we define
\begin{equation}
f(t)=\left\{\begin{array}{ll} 0 & \mbox{ for } t<\beta_{\rm{}inc}\mbox{ or }t>\pi/\alpha_{\rm{}inc}+\beta_{\rm{}inc}\\
{\rm{}sin}^{m}(\alpha_{\rm{}inc} (t-\beta_{\rm{}inc}))&\mbox{ for } \beta_{\rm{}inc}<t<\pi/\alpha_{\rm{}inc}+\beta_{\rm{}inc}
\end{array}
\right.
\label{incident}
\end{equation}
This function is in $H^m(\mathbb{R})$, and we choose $m=4$.  We  choose $\alpha_{\rm{}inc}=4$ and $\beta_{\rm{}inc}=0.5$ in this example.

We can now solve the above problem with $c=1$, $\epsilon_-=1$ and $\epsilon_+=4$ to generate a solution to the scattering problem and hence test convergence of the time stepping method using the fixed spatial mesh in Fig.~\ref{meshtest} and BDF2 in time. The final time is $T=4$ by which time the wave has essentially exited the computational region. Results are shown in Fig.~\ref{error_plots}.  Both in the $L^2$ and $H^1$ norm the convergence rate is ultimately $O((\Delta t)^2)$ as expected from
Theorem~\ref{tm_conv}.  No instability is evident.
\begin{figure}[t]
\begin{center}
\resizebox{0.5\textwidth}{!}{\includegraphics{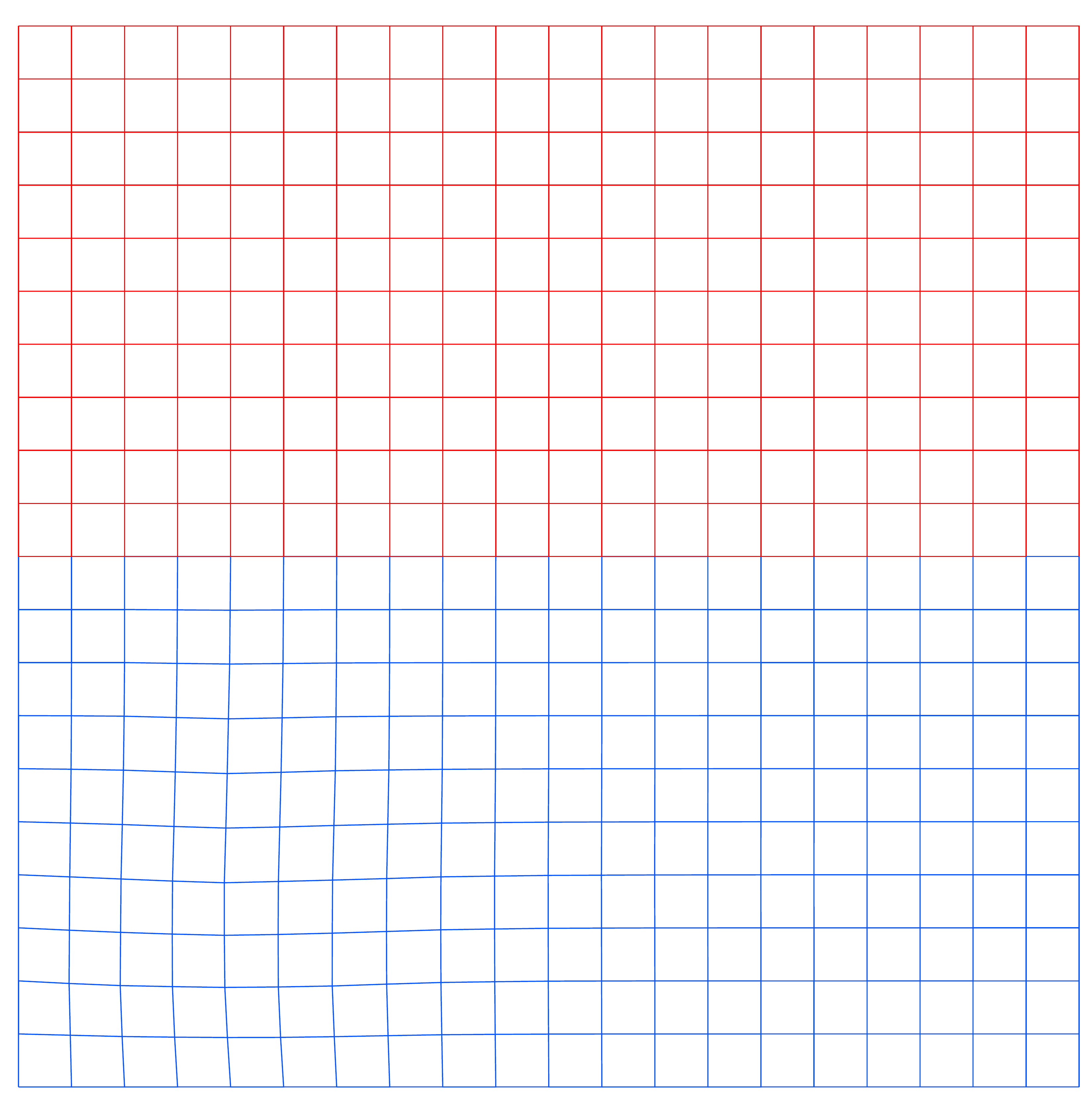}}
\end{center}
\caption{Simple almost uniform spatial mesh used to investigate convergence of the time stepping scheme.  The domain is $\Omega=[0,1]\times[0,1]$, and the interface is at $h_i=1/2$.}
\label{meshtest}
\end{figure}

\begin{figure}[h]
\begin{center}
\begin{tabular}{cc}
\resizebox{0.45\textwidth}{!}{\includegraphics{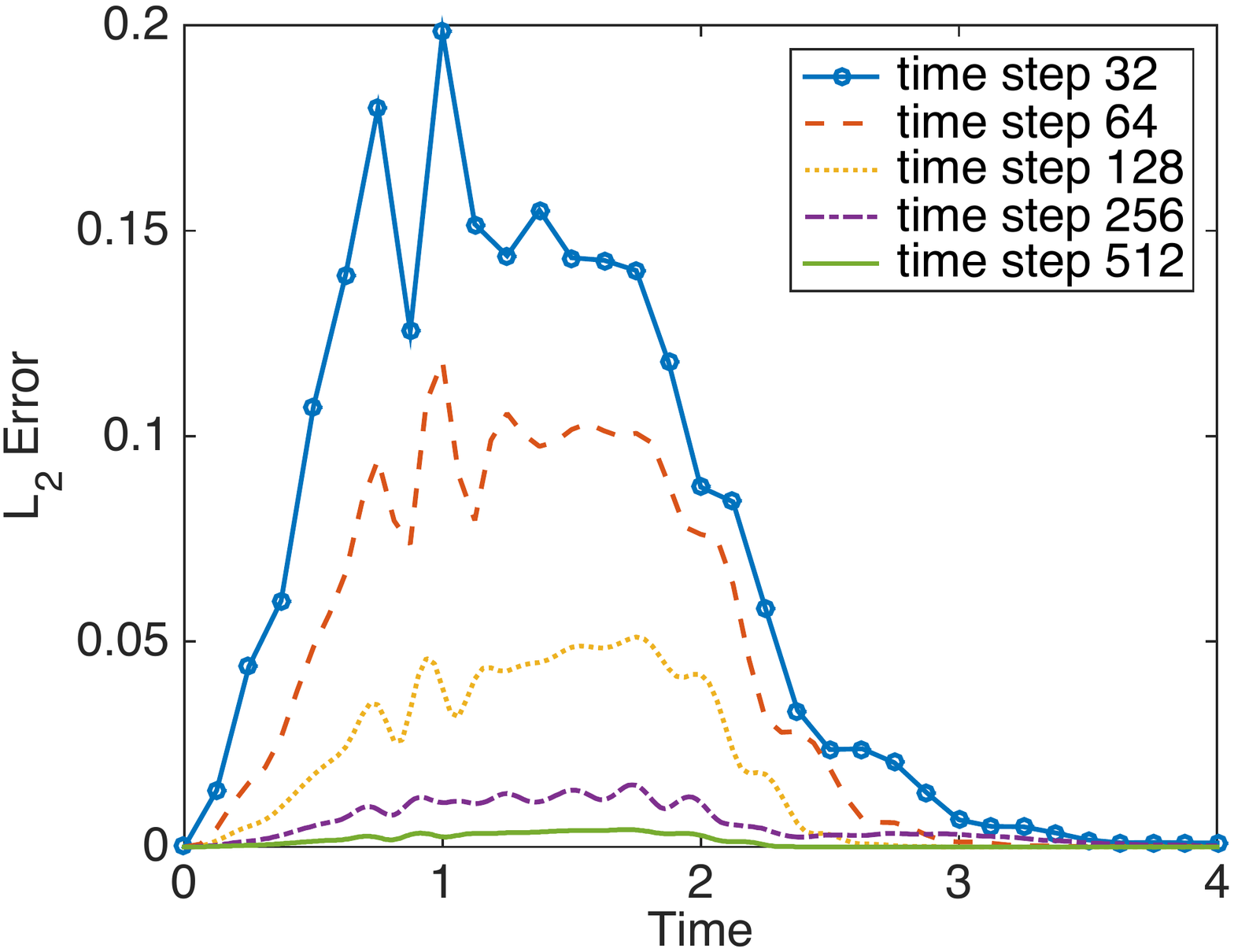}}&
\resizebox{0.45\textwidth}{!}{\includegraphics{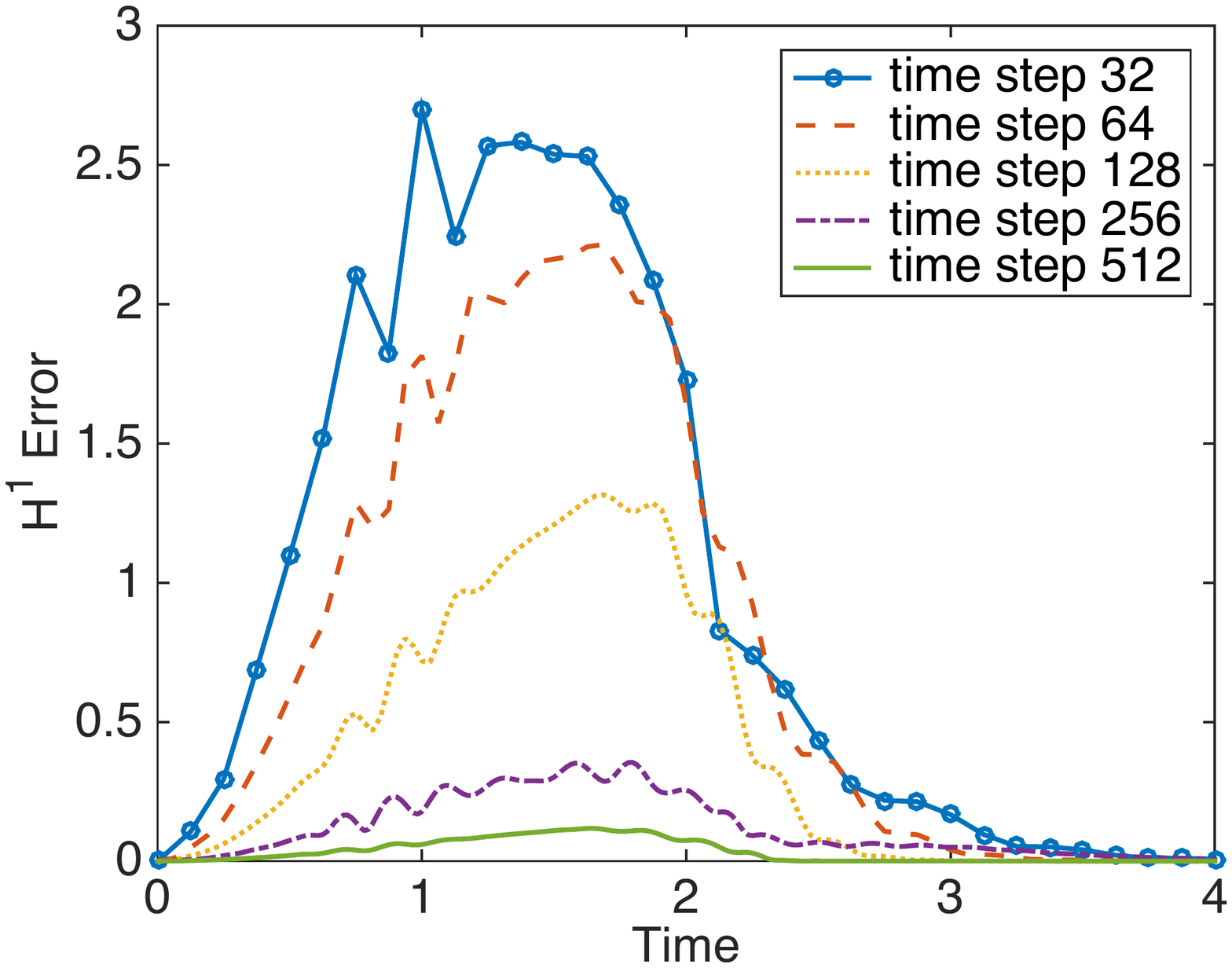}}\\
\resizebox{0.45\textwidth}{!}{\includegraphics{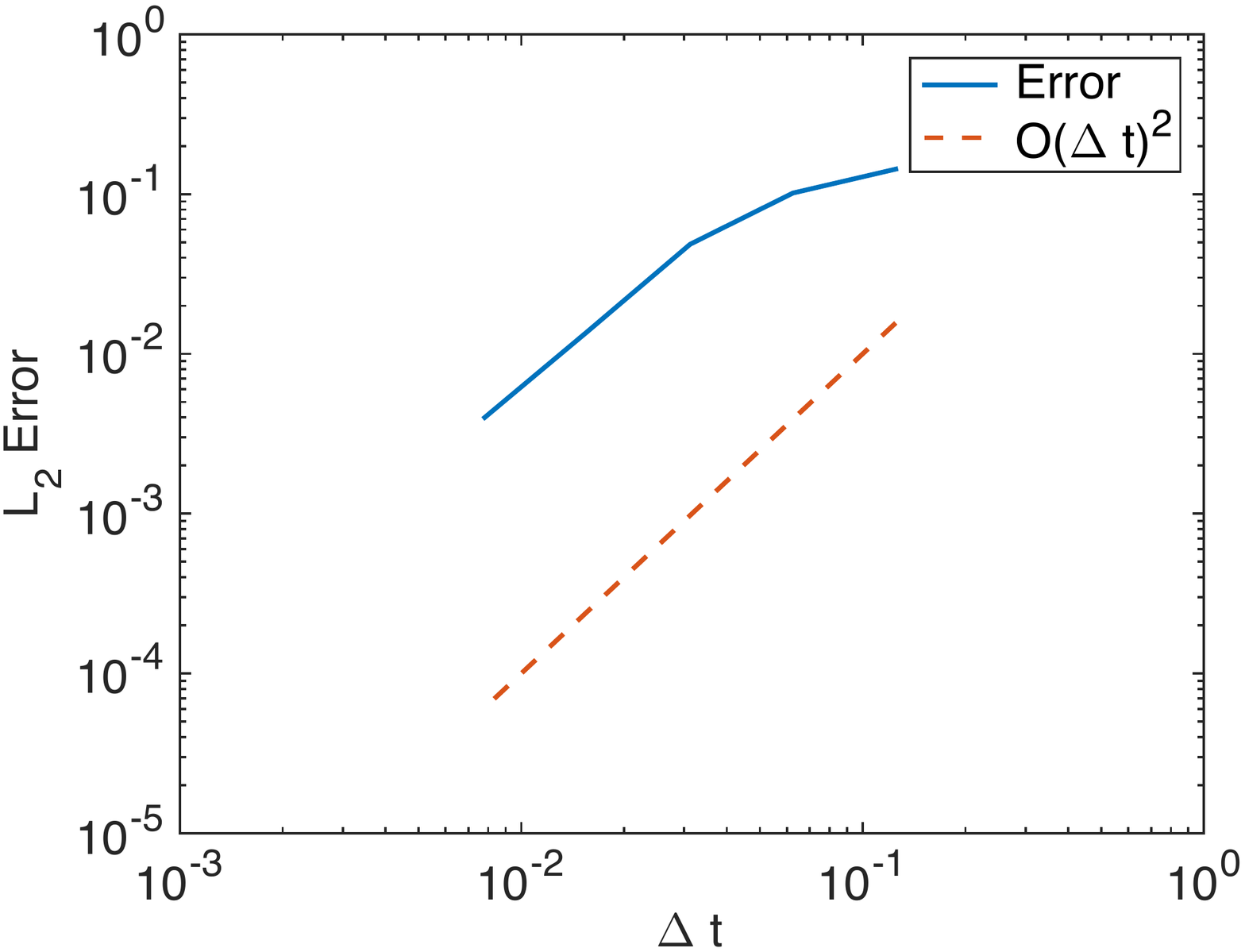}}&
\resizebox{0.45\textwidth}{!}{\includegraphics{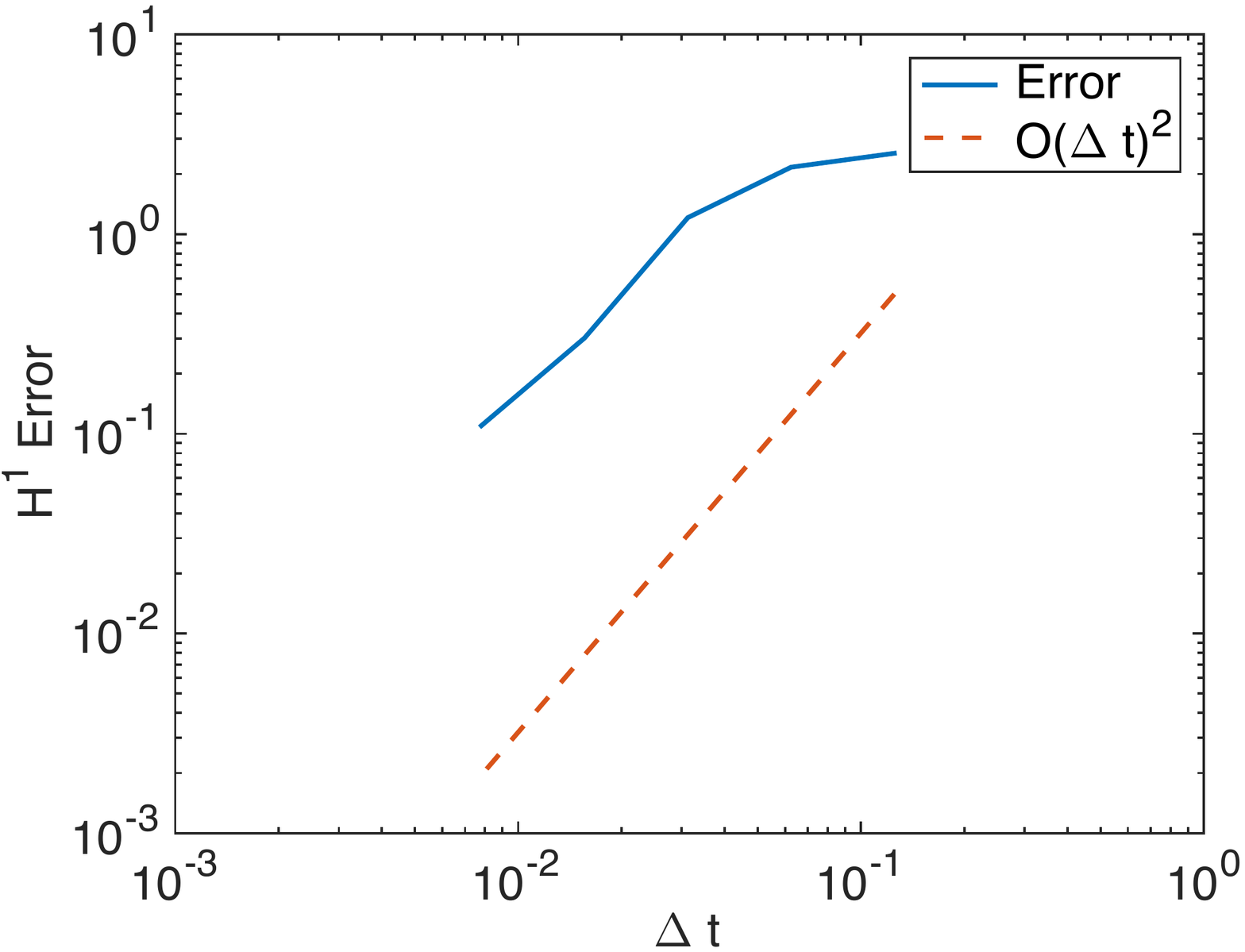}}
\end{tabular}
\end{center}
\caption{Time step convergence of the method.  Top row: The \red{absolute} $L_2$ (left) and $H^1$ norm (right) error at different times.  Bottom row: \red{to check the convergence rate} we show a log-log plot of the \red{absolute} $L_2$ (left) and $H^1$ error (right) as a function of $\Delta t$ at $t=1.5$.  We also show a $O((\Delta t)^2)$ reference line.}
\label{error_plots}
\end{figure}

 \subsection{Frequency dependent materials}\label{freq_dep}
 Our Assumption~\ref{A1}  on the coefficients in  the differential equation handles at least two important cases relevant to  practical applications to solar-voltaic components.  To see why this is necessary note that in thin film devices the size of components is close to the wavelength of light.  At these frequencies (for example corresponding to a free space wavelength of 500nm) metals can no-longer be modeled as perfect conductors and the light penetrates an appreciable distance into the metal.  For example, at a free space wavelength of 500nm, $\hat{\epsilon}_r=-2.4683 + 3.1173i$ for gold. So in fact $\hat{\epsilon}_r$ is often complex valued and
 the real part may not be positive.  In a Drude model (commonly used to model
 metals \cite[Ch. 2]{ashcroft_book}) we have
 \begin{equation}
\hat{b}_{\rm{}m}(s)=\alpha_m+\frac{\beta_m}{s(1+\gamma_m s)}\label{drude}
 \end{equation}
 for positive, perhaps spatially dependent, real constants $\alpha_m$, $\beta_m$ and $\gamma_m$.  Note that if $\gamma_m=0$ this reduces to the usual model of conductivity.
 To verify property 2 of Assumption~\ref{A1} in the case of a Drude model note that
 \begin{eqnarray*}
\Re( s(\hat{b}_{\rm{}m}(s)-d_1^2))&=&\Re\left( s\alpha_m +\frac{(1+\gamma_m\overline{s})}{|1+\gamma_ms|^2}-d_1^2s
\right)\\
&=&\sigma\left(\alpha_m-d_1^2+\frac{|s|^2}{\sigma|1+\gamma_m s|^2}(1+\sigma\gamma_m)\right)\\
&\geq& \sigma\left(\alpha_m-d_1^2\right).
 \end{eqnarray*}
 Provided, for example, $\alpha_m-d_1^2>\gamma_0>0$ we have the desired positivity.
 
Because of the wide range of frequencies need to simulate a solar cell components across the solar spectrum, it is also necessary to take into account frequency dependence for dielectics. Dielectric components are often modeled as having no absorption but frequency dependence (so $\Im(\hat{b}(s))=0$). A commonly used model
is the Sellmeier equations~\cite[page 472]{jenkins_book}. In this case, the simplest model is
\begin{equation}
\hat{b}_{\rm{}s}(s)=1+\frac{\alpha_s}{1+\beta_s s^2}.
\label{sell}
\end{equation}
More generally there are usually sums of rational functions of the same form as above. Here $\alpha_s$ and $\beta_s$ are positive real constants.  This model also fits into the theory because
\begin{eqnarray*}
\Re( s(\hat{b}_{\rm{}s}(s)-d_1^2))&=&\Re\left( s+\alpha_s\frac{(s+\beta_s |s|^2\overline{s})}{|1+\beta_s s^2|^2}-sd_1^2\right)\\
&=&\sigma\left(1+\alpha_s\frac{(1+\beta_s |s|^2)}{|1+\beta_s s^2|^2}-d_1^2\right)\\
&\geq& \sigma(1-d_1^2).
\end{eqnarray*}
So provided $1-d_1^2>\gamma_0>0$ we again have the necessary lower bound.

Neither of these models satisfy the Kramers-Kronig relationship that guarantees stability and causality, but, as we have proved, they still provide a stable time dependent response for finite time. 
Note that the Cauchy model of a dielectric \cite[page 468]{jenkins_book} does not fit into this theory.  

\begin{figure}[t]
\begin {center}
\begin{tabular}{ccc}
\resizebox{0.45\textwidth}{!}{\includegraphics{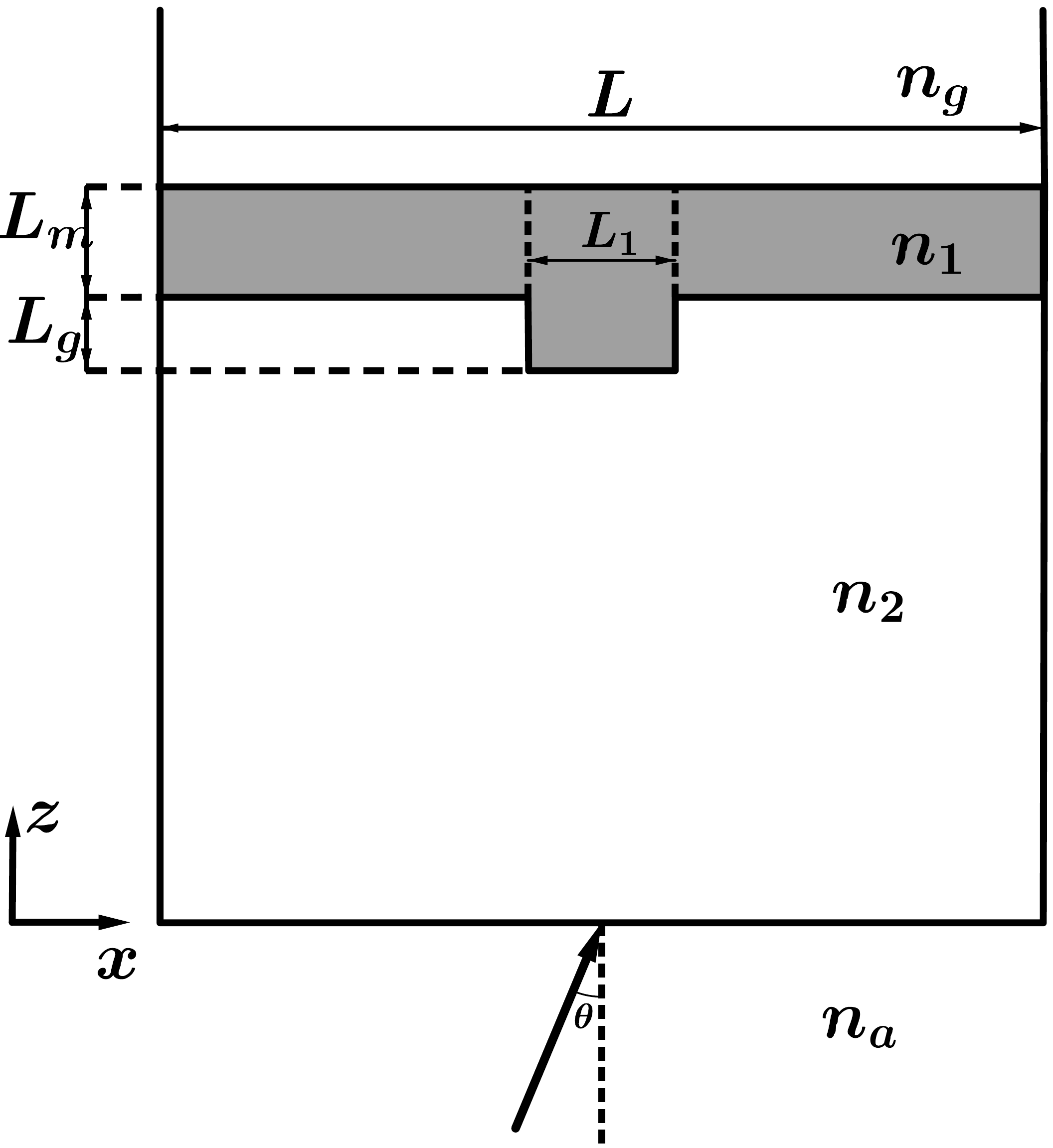}}&\hspace*{.2in}
\resizebox{0.45\textwidth}{!}{\includegraphics{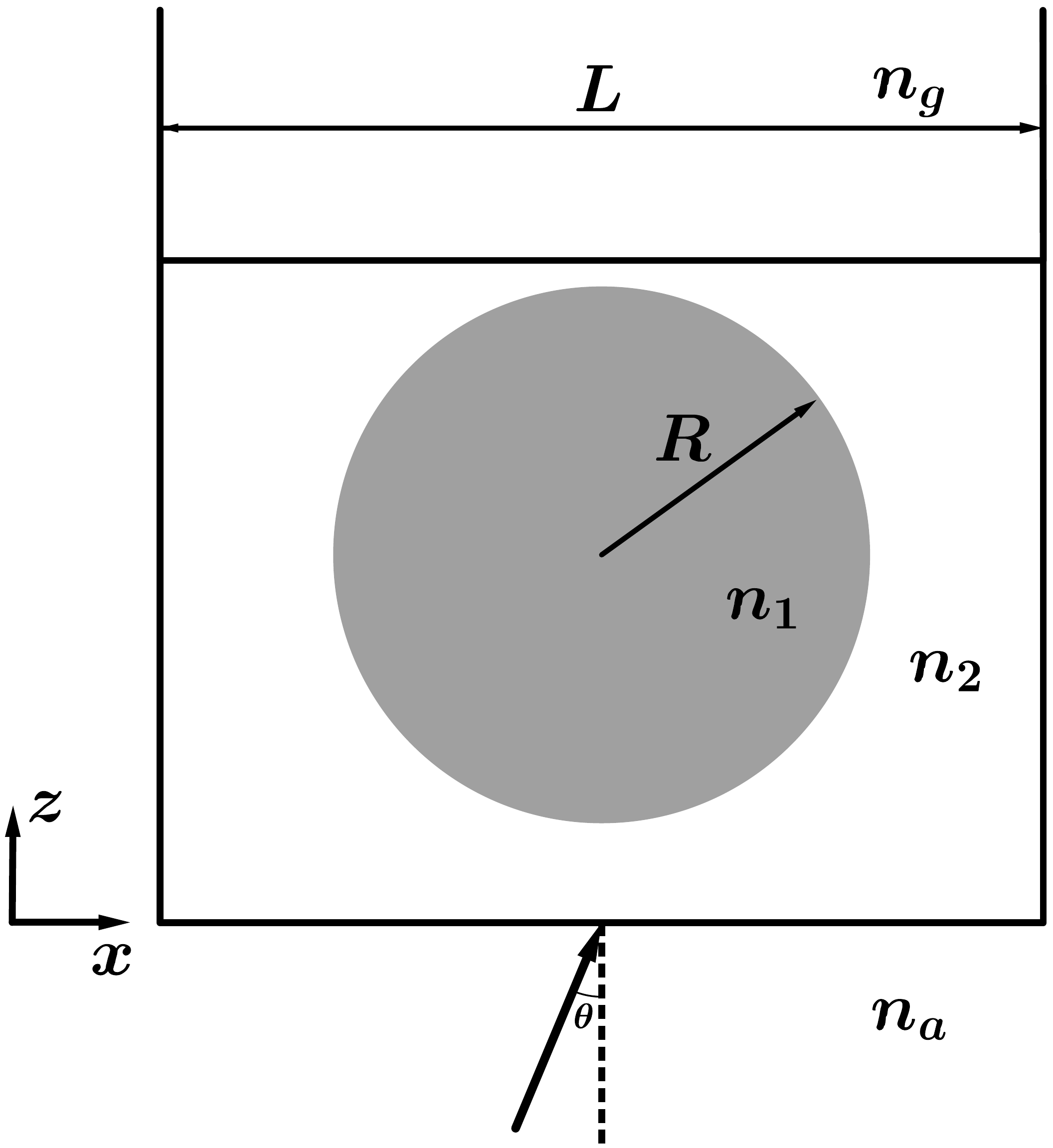}}
\end{tabular}

\end{center}
\caption{Two gratings used for the numerical results on frequency dependent media. In this figure $n_1$, $n_2$, $n_a$ and $n_g$ label the refractive indices for the various subdomains.  The permittivity is obtained by squaring the refractive index so $\epsilon_r=n_1^2$ in the grating in the left example. The
left figure shows a simple metallic grating in air.  The right figure shows a cylinder slightly above a glass
substrate and features a frequency dependent refractive index in the infinite glass region.}
\label{cartoon1}
\end{figure}

\subsubsection{Scattering from a Drude metal}
We now show an example of a typical grating (without other components of a thin film solar cell in order to emphasize grating effects).  The geometry of the experiment is shown in the left panel of Fig.~\ref{cartoon1}.  Light is incident (at incidence  $\theta= 6^\circ$) on the thin metal grating which is modeled by a fictional Drude medium (see (\ref{drude})) having parameters
 $\alpha_m = 4.0$, $\beta_m=10.0$ and $\gamma_m=0.5$.   We set $n_2$ and $n_g$ 
to be 1 and chose 10 modes for the top and bottom boundaries. By setting $m=4$, $\alpha_{\rm{}inc }=4.0$ and
$\beta_{\rm{}inc}= 0$, we chose (\ref{incident}) as the incident function for this example.  In particular the incident field has a non zero Fourier transform at low frequencies where our fictitious Drude model has negative real part, so that the dispersive and dissipative nature of the medium is probed. 
\begin{figure}[h]
\begin {center}
\begin{tabular}{cc}
\resizebox{0.45\textwidth}{!}{\includegraphics{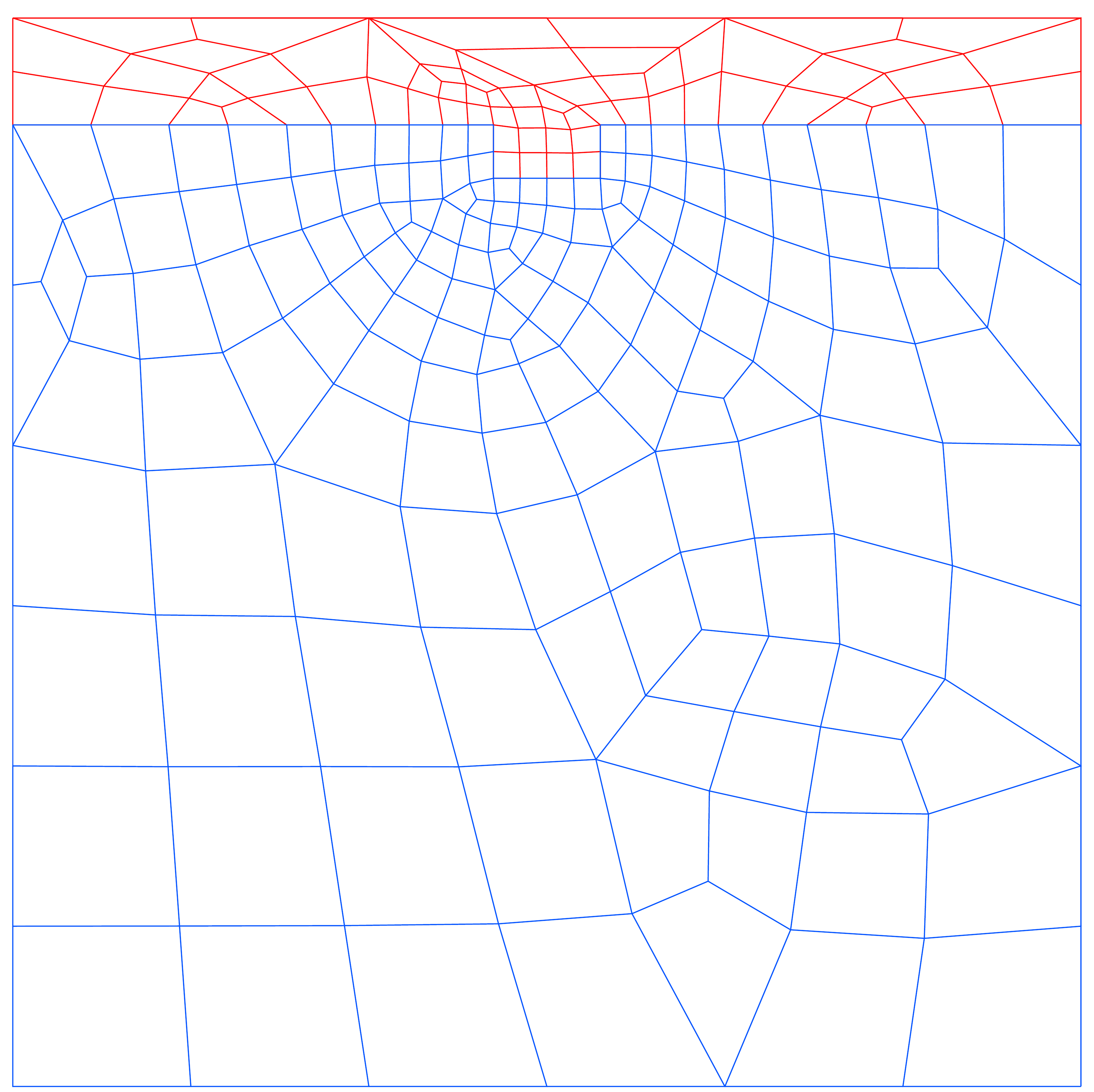}}&
\resizebox{0.5\textwidth}{!}{\includegraphics{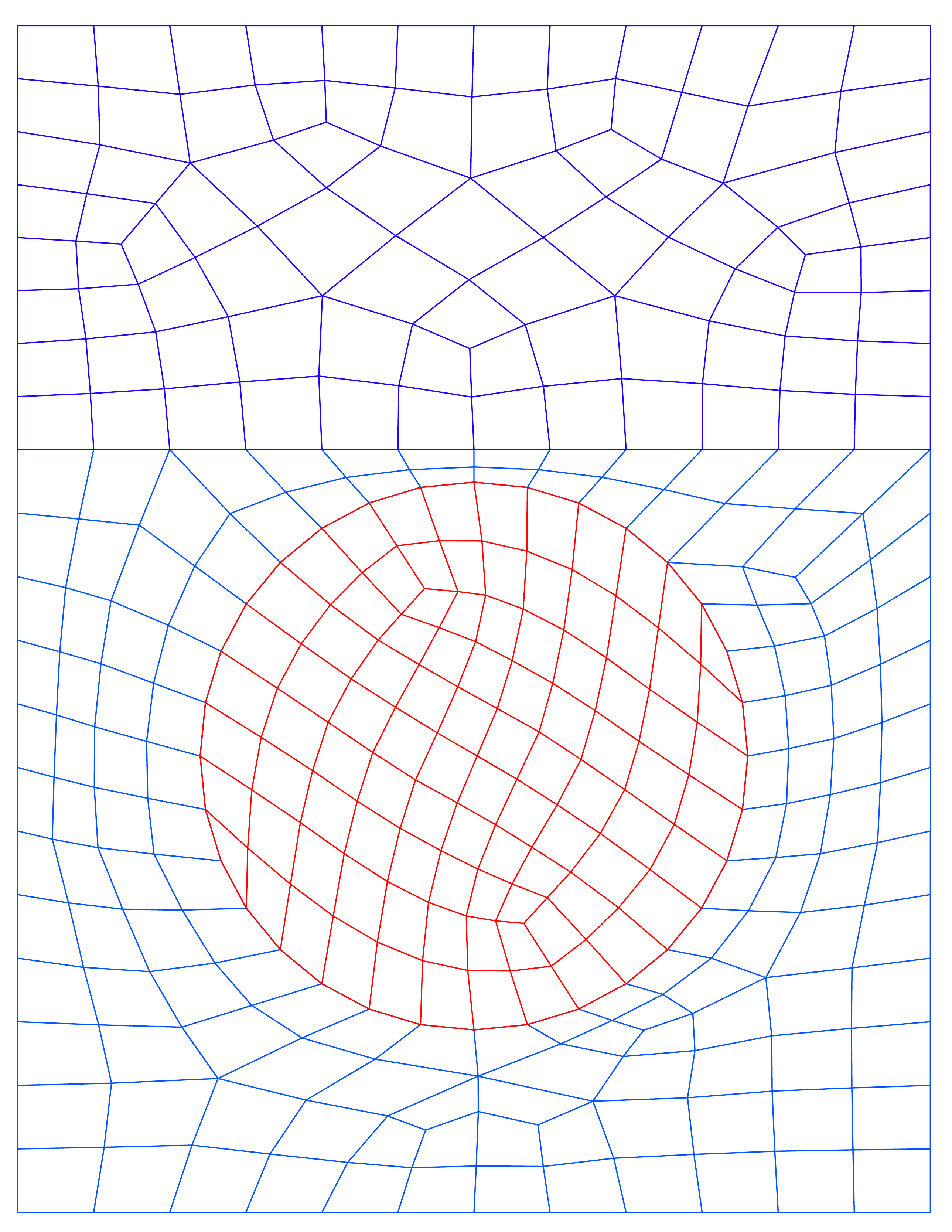}}
\end{tabular}
\end{center}
\caption{Mesh for the Drude model (left) and Sellmeier model (right).}
\label{drudesell}
\end{figure}

The domain is $\Omega=[0,1]\times[0,1]$ and so, using the notation in Fig.~\ref{cartoon1} left panel, $L =1$.
In addition we choose  $L_1 = 0.1$,  $L_m = 0.1$ and $L_g = 0.05$. The speed of light is set to $c=1$ and we integrate until $T=4$ using 512 time steps so $\Delta t=0.0078$. The mesh is from Fig.~\ref{drudesell} left panel.  This spatial mesh was generated using {\tt{}gmsh}~\cite{gmsh} a triangular mesh generator that can post-process the mesh to create a quadrilateral mesh.  Obviously the resulting mesh is rather poor but this is useful to test the sensitivity of the method to mesh perturbations. Density plots of the computed total field are shown in Fig.~\ref{drudesnap}.  At early times the incident field is clearly visible followed by a strong scattered field (in a thin film solar cell, other structures would serve to trap the energy).  In addition the transmitted field into the metal can be seen.  At late times a component of the field running along the metallic boundary is also visible and these may be related to surface plasmon polaritons \cite{AtPo10}.  No exact solution exists and this example is intended to show that the 
time stepping scheme can indeed handle a Drude model in a stable way.
\begin{figure}[htb]
\begin{center}
\begin{tabular}{cc}
\resizebox{0.5\textwidth}{!}{\includegraphics{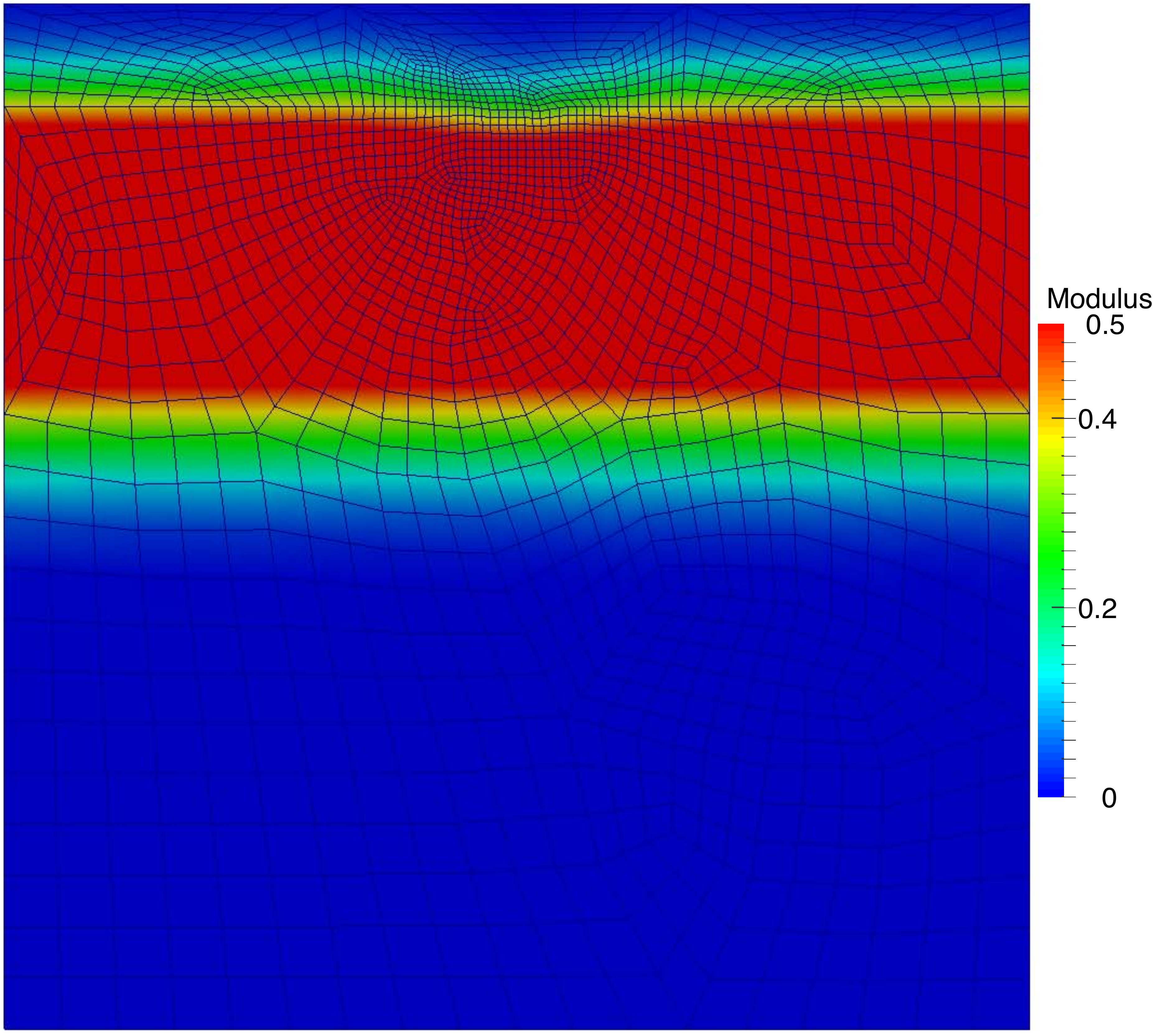}}&
\resizebox{0.5\textwidth}{!}{\includegraphics{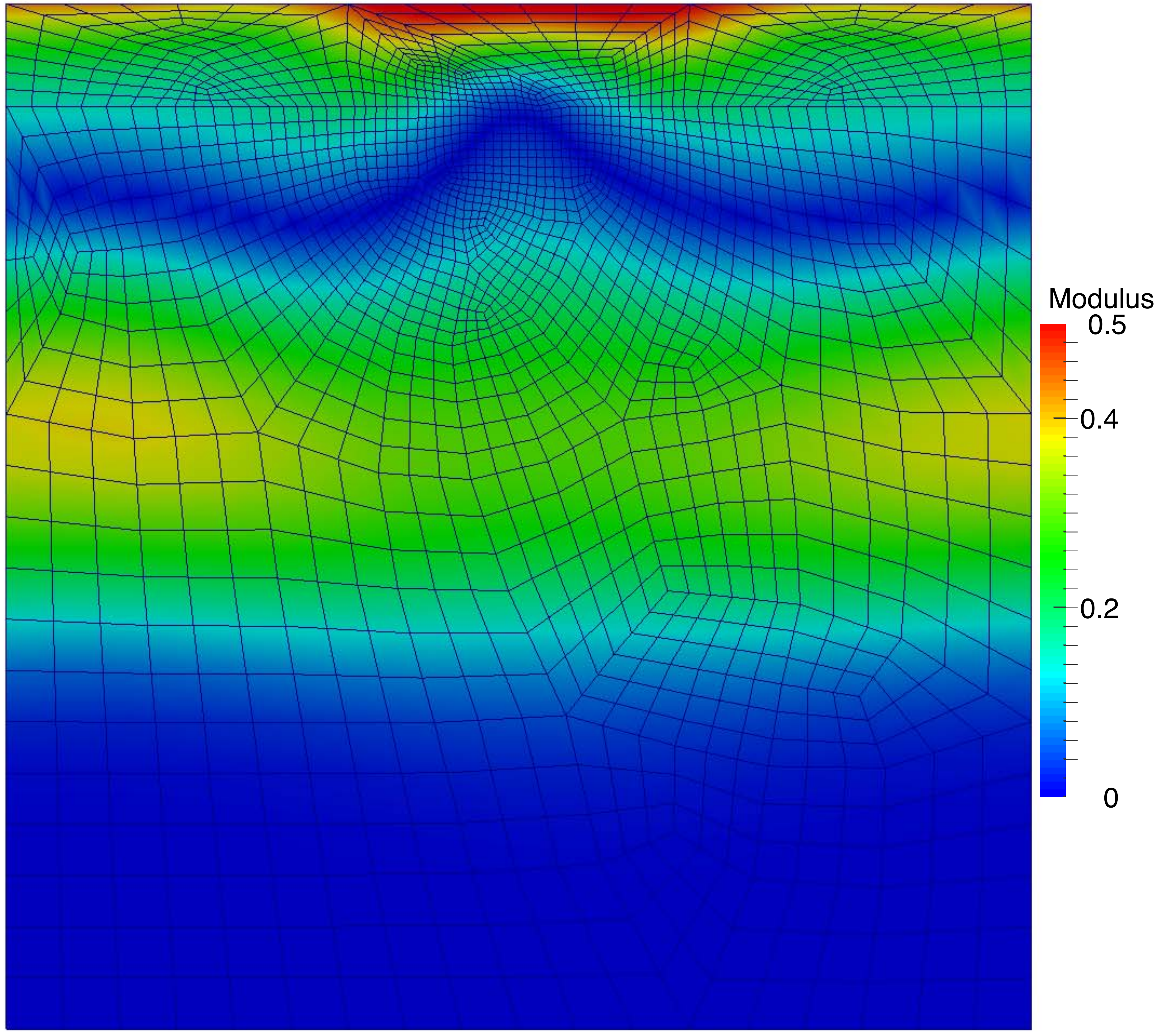}}\\
\resizebox{0.5\textwidth}{!}{\includegraphics{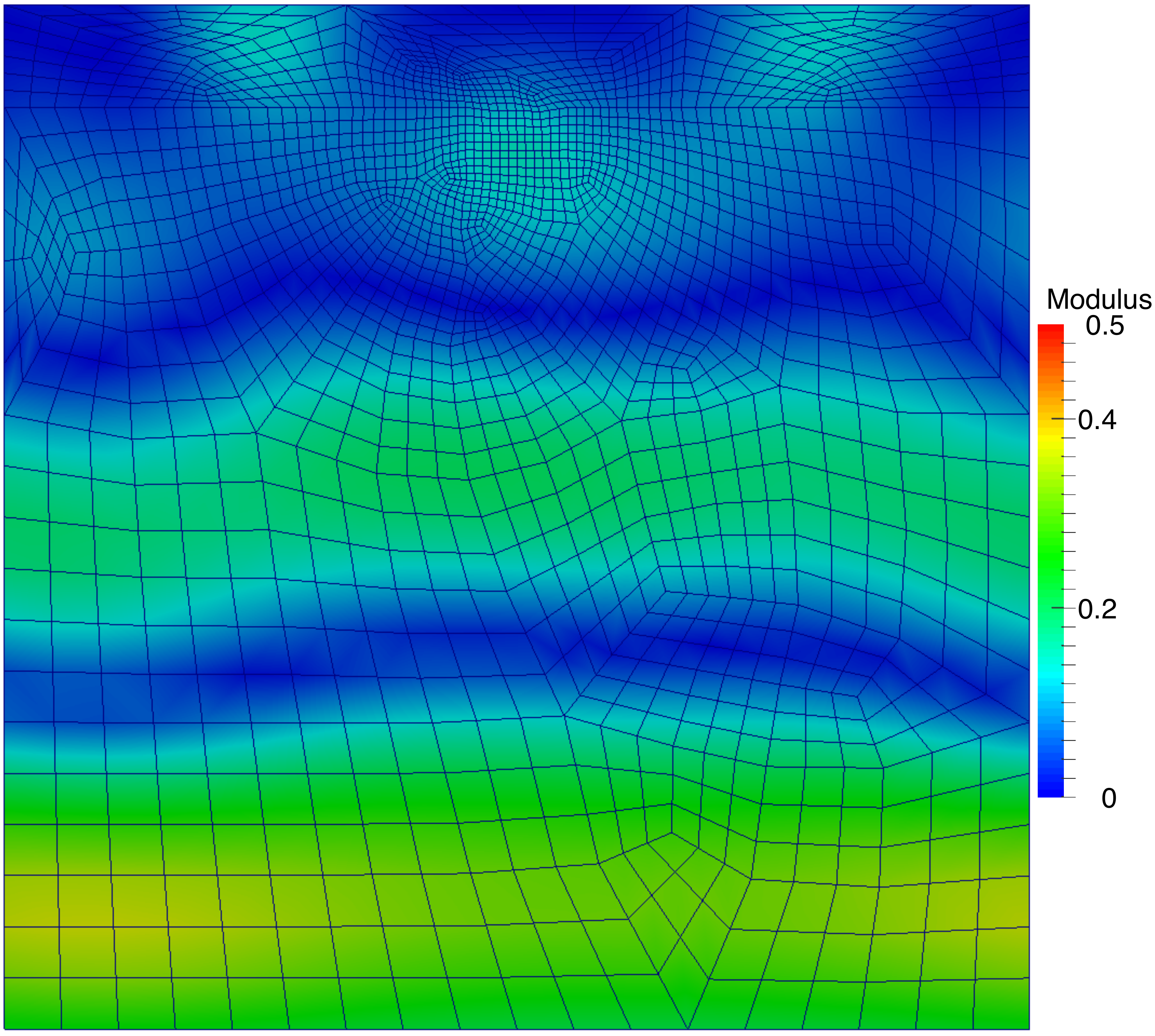}}&
\resizebox{0.5\textwidth}{!}{\includegraphics{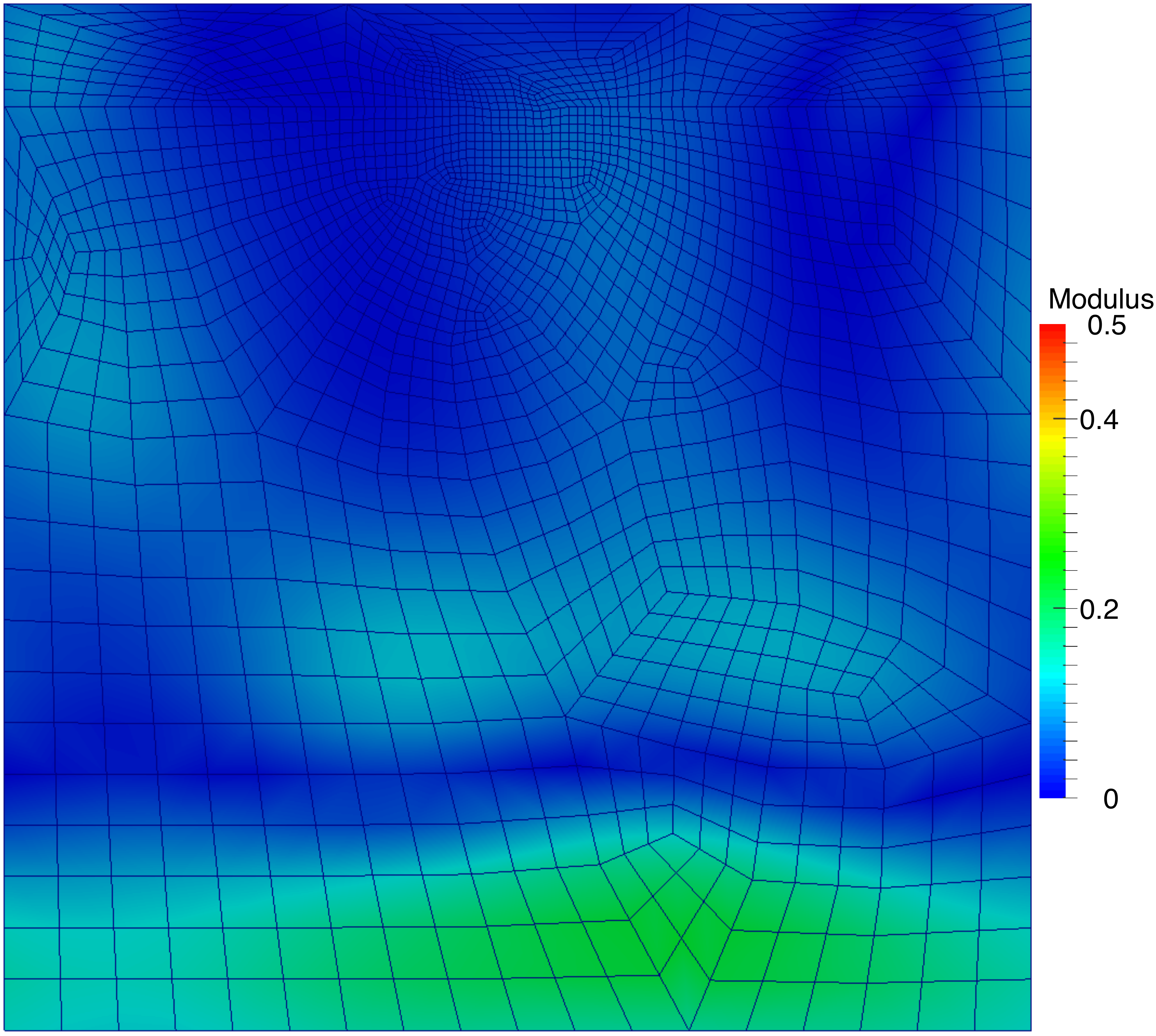}}\\
\resizebox{0.5\textwidth}{!}{\includegraphics{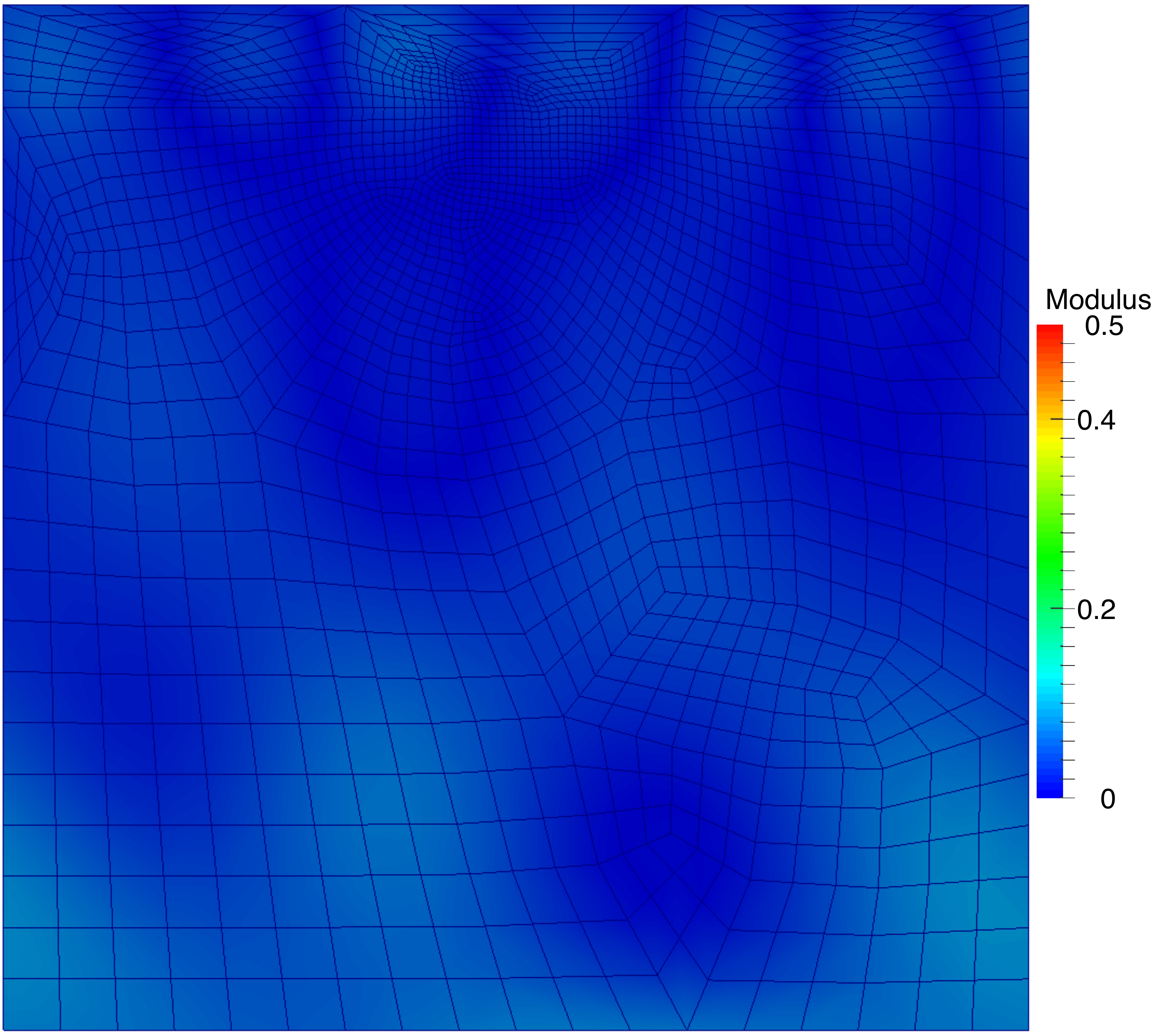}}&
\resizebox{0.5\textwidth}{!}{\includegraphics{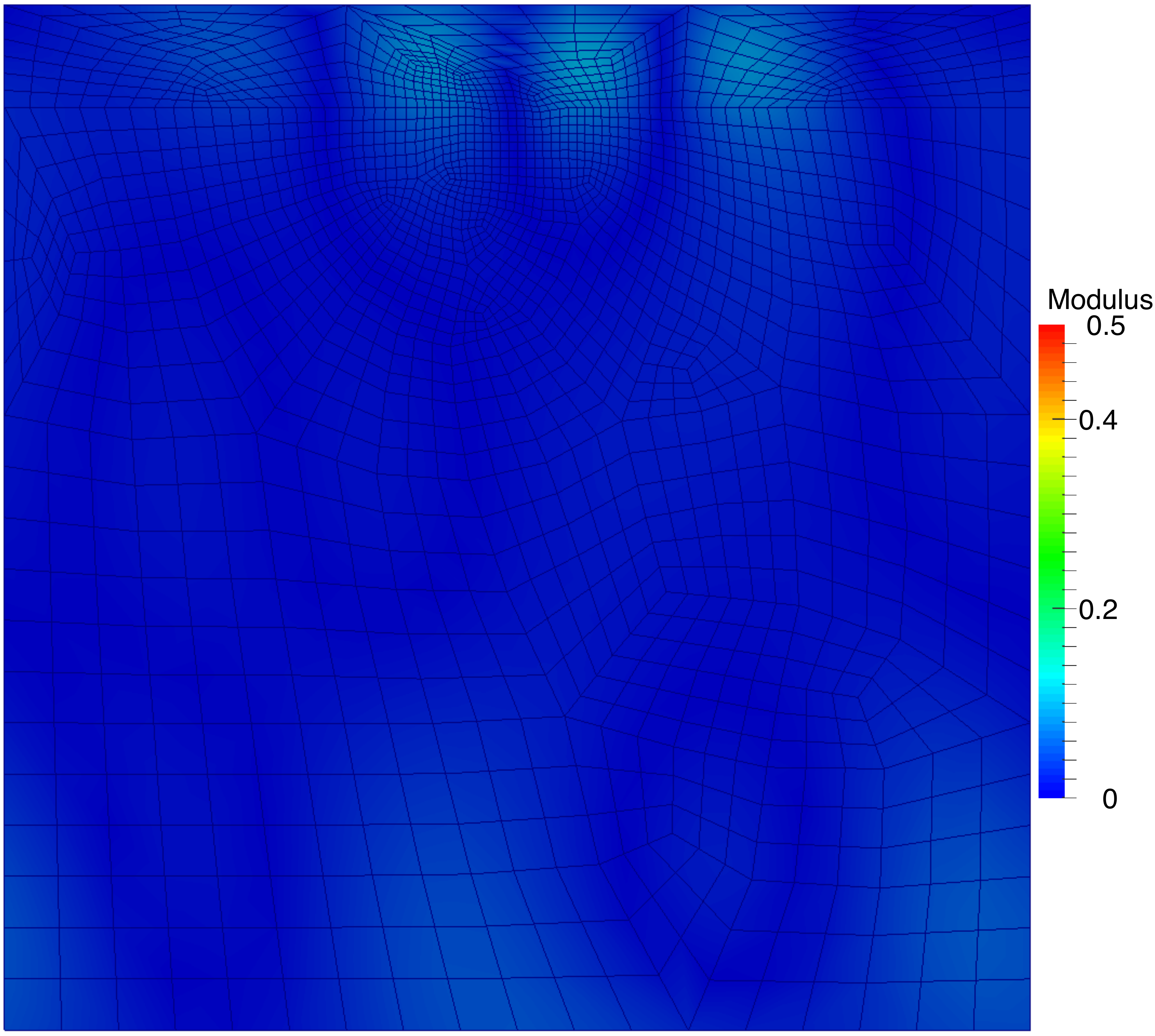}}
\end{tabular}
\end{center}
\caption{Snapshots of density plots the absolute value of the  total field $w(\bfx,t)$ for the Drude model at times $t=1.27,1.72,2.17,2.62,3.07,3.52$ (from top left to bottom right). At later times faint waves traveling close to the surface of the metal are suggestive of surface plasmon polaritons \cite{AtPo10}.}
\label{drudesnap}
\end{figure}

\subsubsection{Scattering from a Sellmeier dielectric}
The next model is an example for the Sellmeier model of a dielectric (see (\ref{sell})) taken from
\cite{Li_15_1} where a circular scatterer of constant permittivity sits on a glass substrate. The dimension of the unit cell and other parameters
are chosen so that Fourier components of an incident wave with wavelength greater than 650\,nm are
scattered predominately in a different direction to those with a wavelength below 650\,nm and the device is called a spectrum splitter.

 The geometric configuration is shown in the right panel of Fig.~\ref{drudesell} 
where the parameters are as follows: $L = 560$\,nm and $R = 168$\,nm (so $R/L =0.3$).  In a slight departure from the optimized result in \cite{Li_15_1} we introduce a 20nm gap between the circular scatterer and the gas substrate (otherwise {\tt gmsh} could not generate a mesh).  The mesh used for the upcoming results is shown in the right hand panel of Fig.~\ref{drudesell}.

We set $n_1=1.8$, $n_2=1$, $c=0.3\,\mu$m/femtosecond and $T=8$\,femtoseconds \red{and using 512 time steps so $\Delta t = 0.0156$}.
We chose 10 modes for the top and bottom boundaries. The glass substrate is assumed to be
 SF11 glass~\cite{sf11} so the Sellmeier model is
 \[
\epsilon_r=n_g^2=1+\frac{1.73759695\lambda^2}{\lambda^2-0.013188707}+\frac{0.313747346\lambda^2}{\lambda^2-0.0623068142}+\frac{1.89878101\lambda^2}{\lambda^2-155.23629}
\]
with $\lambda$ denoting the free space wavelength in units of $\mu$meters.   More generally
the   Sellmeier model is
\[
\epsilon_r=1+\sum_{j=1}^3\frac{\alpha_j\lambda^2}{\lambda^2-\beta_j}
\]
Converting  first to the Fourier frequency domain and then to the  Laplace transform domain this becomes
\[
\epsilon_r=1+\sum_{j=1}^3\frac{\alpha_j}{1+\frac{\beta_j}{4\pi^2 c_0^2}s^2}
\]
As explained earlier, the above rational function easily fits into our theory.

We use the following function as the incident wave:
\begin{equation}
f(\tau) = {\rm{}sin}(2.899 \tau){\rm{}e}^{-2(\tau-3)^2}, \quad\tau\in\mathbb{R},\label{finc}
\end{equation}
and the angle of incidence is $\theta = 6^\circ$.  For a graph of this function see Fig.~\ref{graph}. Considering the Fourier transform of this function, it has a maximum Fourier coefficient at the switch point $650$\,nm and hence has Fourier components on either side of the switch.
\begin{figure}
\begin{center}
\resizebox{0.6\textwidth}{!}{\includegraphics{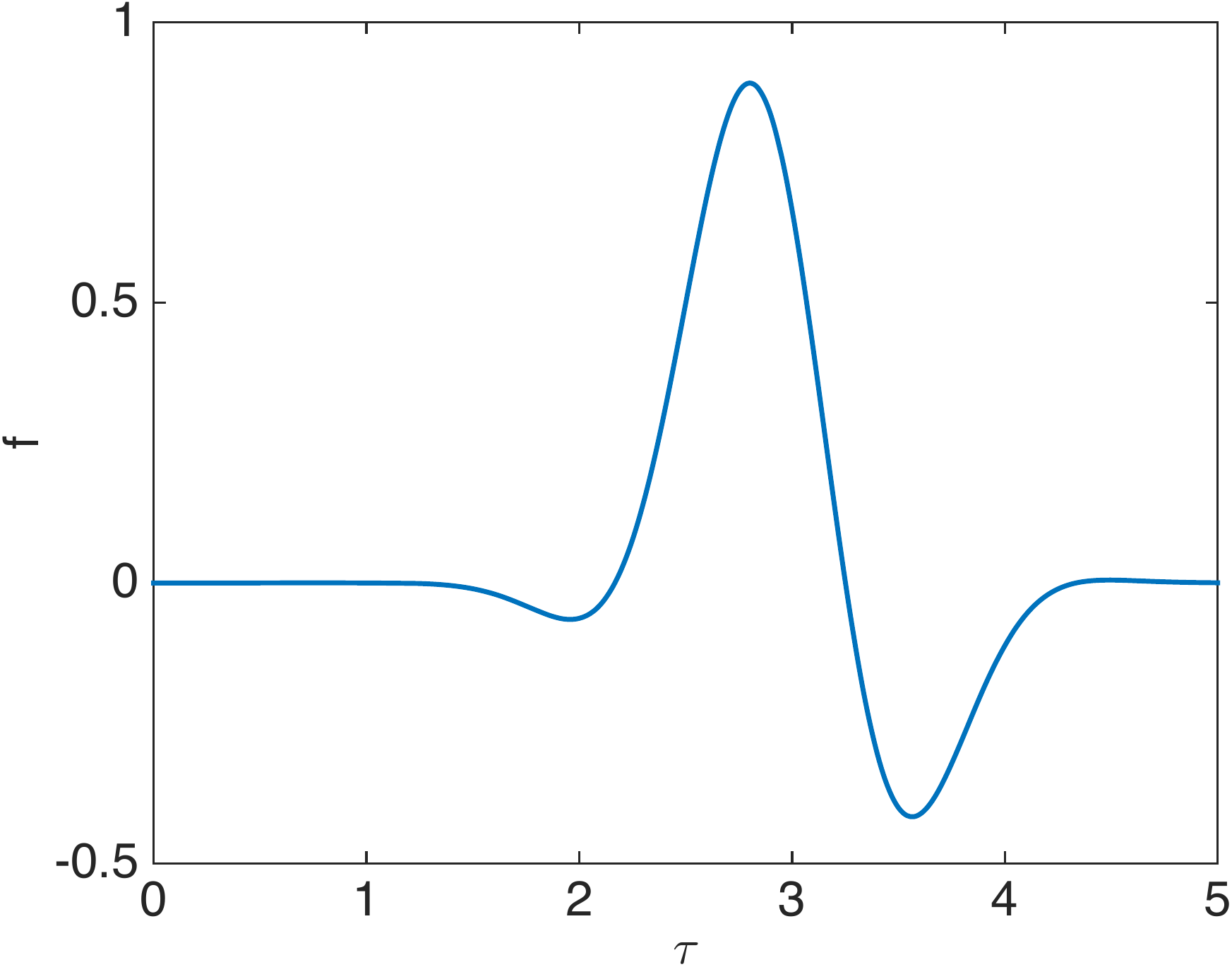}}
\end{center}
\caption{A graph of the function $f(\tau)$ against $\tau$ for the incident field given in equation (\ref{finc}).  The incident wave $w^i$ is given by
replacing $\tau$ by $t-Ld_1/c-d_2y/c$ as shown in (\ref{winc}).}
\label{graph}
\end{figure}

Results are shown in Fig.~\ref{sellsnap}.  Again we do not have an exact solution, but demonstrate that the solution is stable. Spectral splitting~\cite{Li_15_1} may be visible as two higher intensity zones moving in different directions (see  $t=5.29,6.14$ (a high intensity region moving up and to the right) and $t=6.99,7.84$ (a lower intensity region in the top left moving up slightly to the left)). 
\begin{figure}
\begin{center}
\begin{tabular}{cc}
\resizebox{0.42\textwidth}{!}{\includegraphics{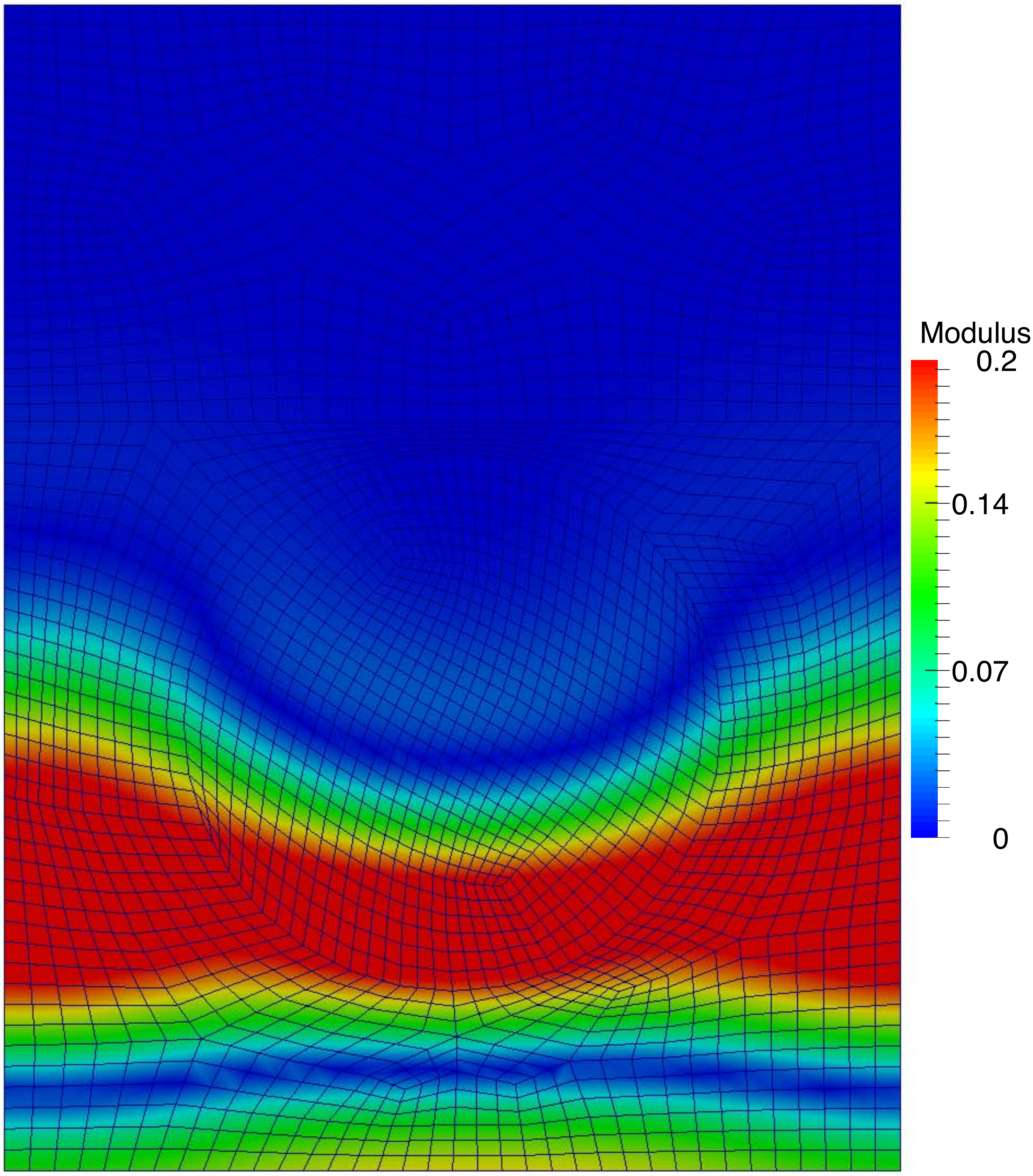}}&
\resizebox{0.42\textwidth}{!}{\includegraphics{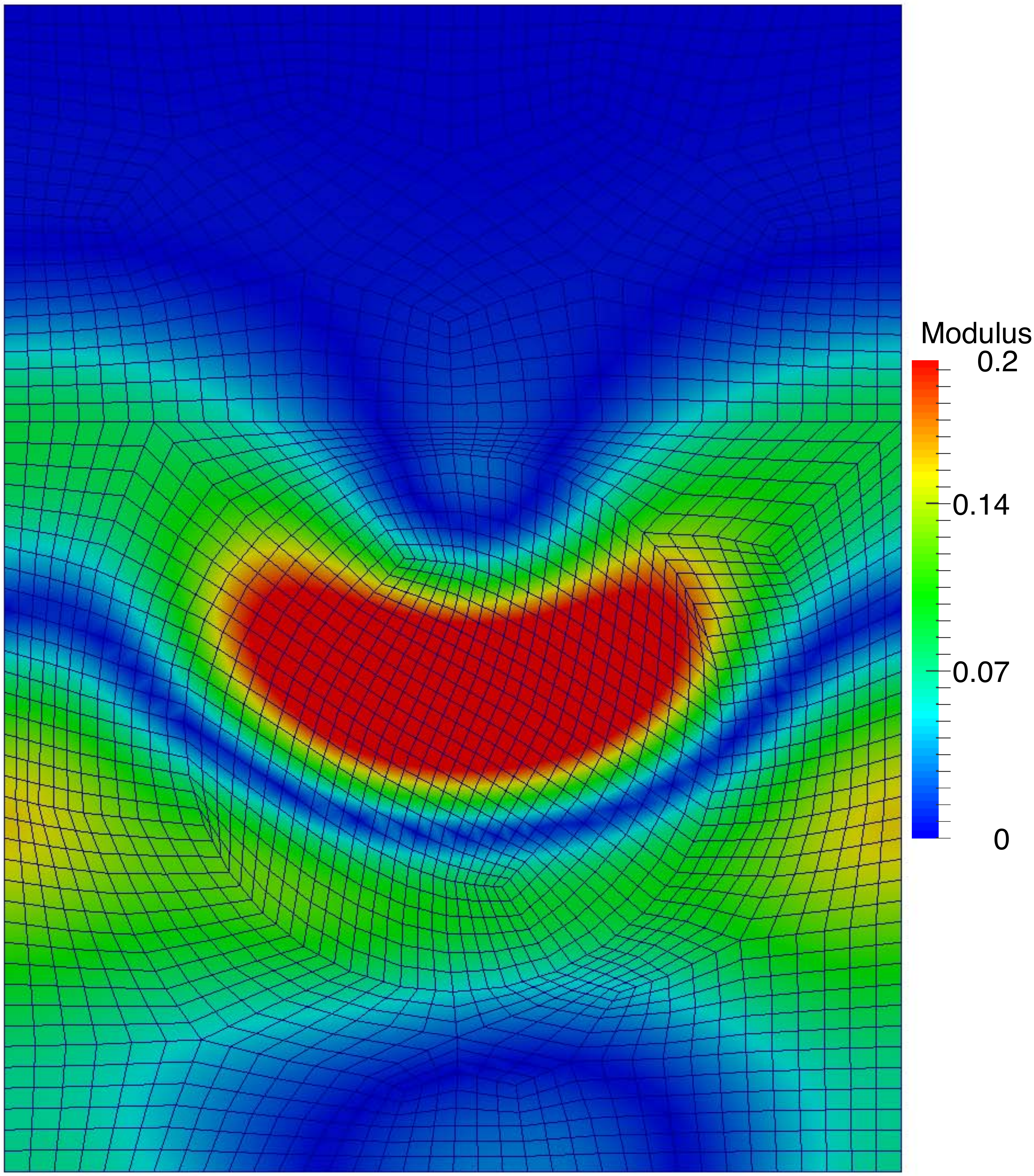}}\\
\resizebox{0.42\textwidth}{!}{\includegraphics{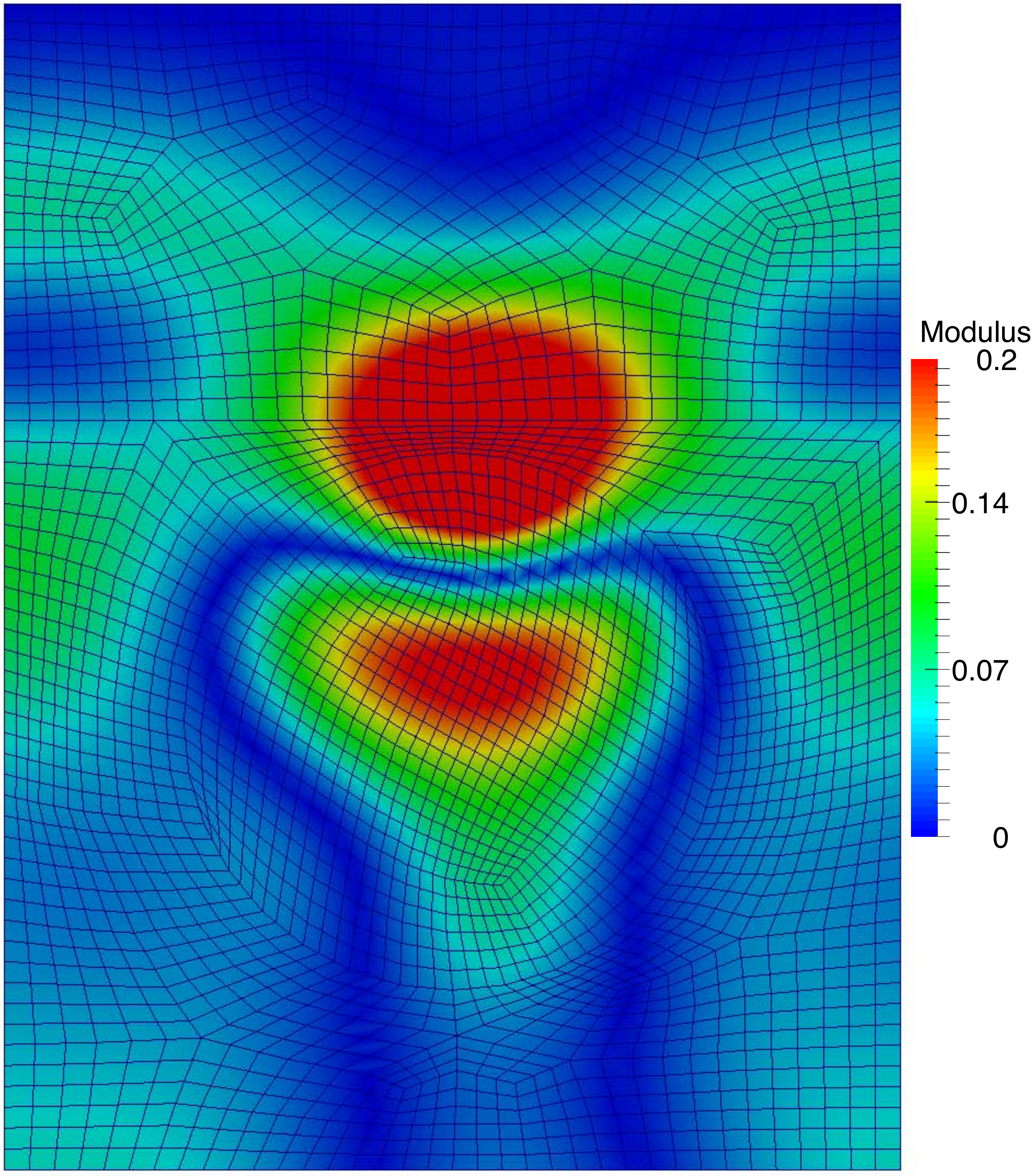}}&
\resizebox{0.42\textwidth}{!}{\includegraphics{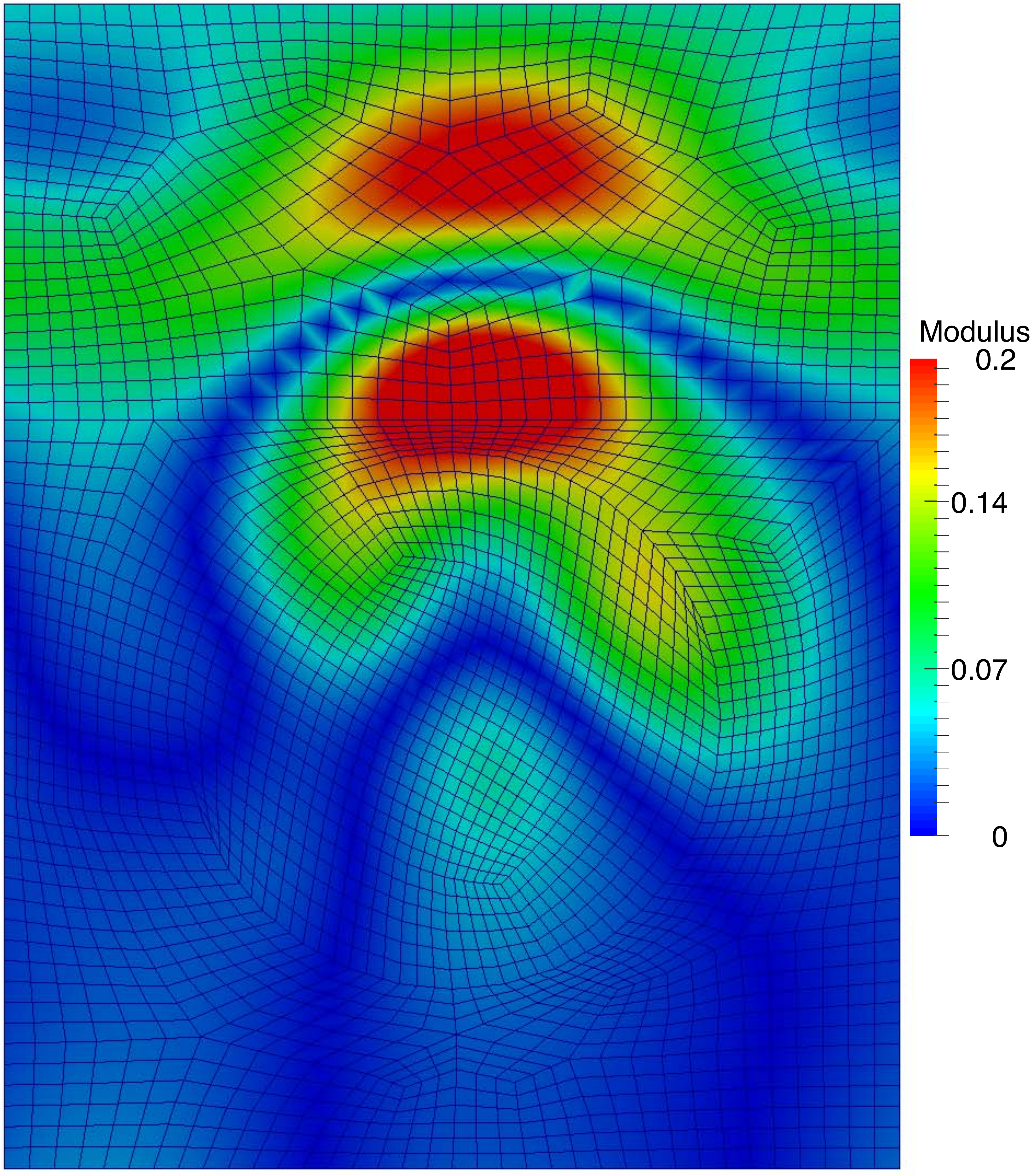}}\\
\resizebox{0.42\textwidth}{!}{\includegraphics{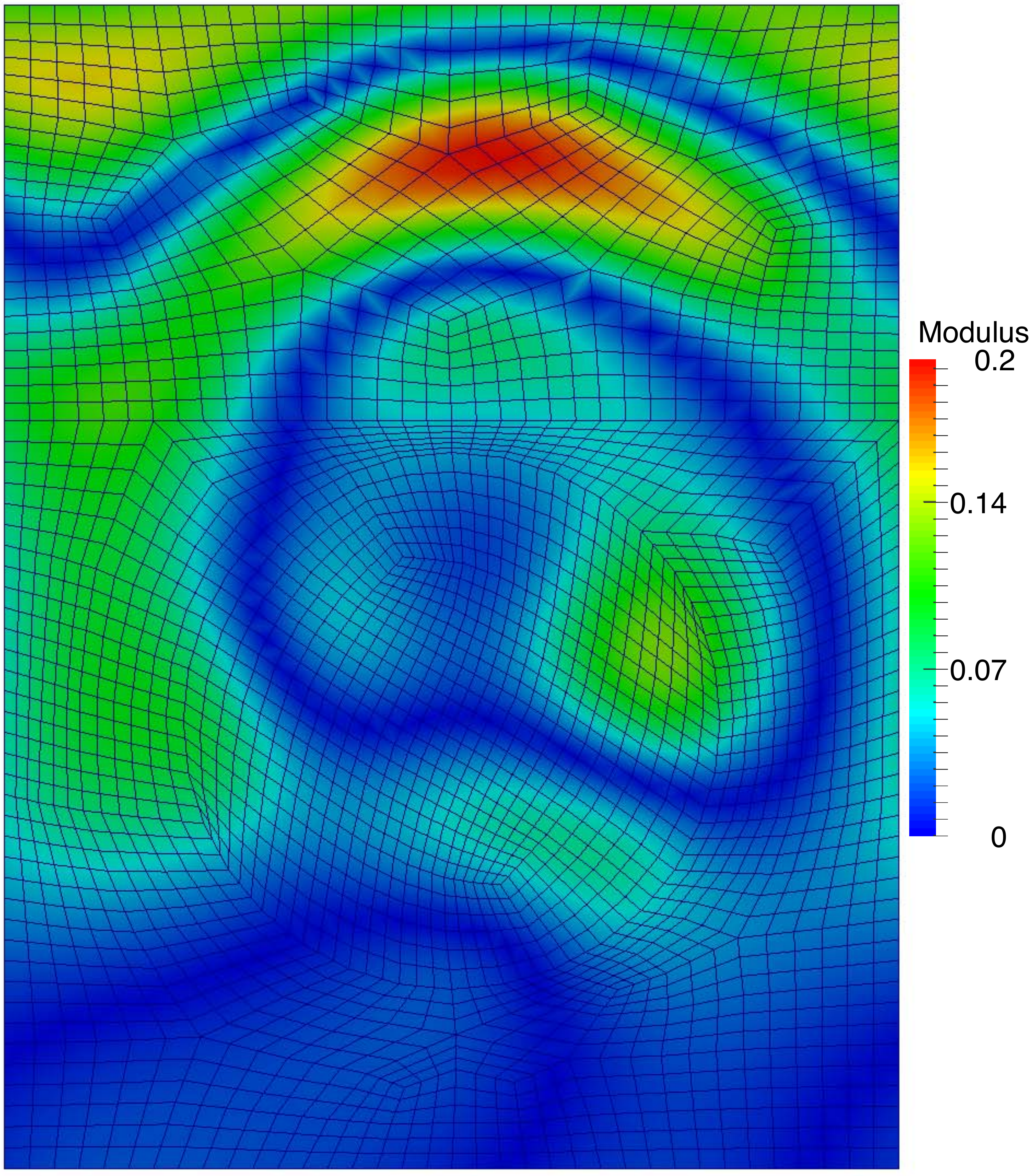}}&
\resizebox{0.42\textwidth}{!}{\includegraphics{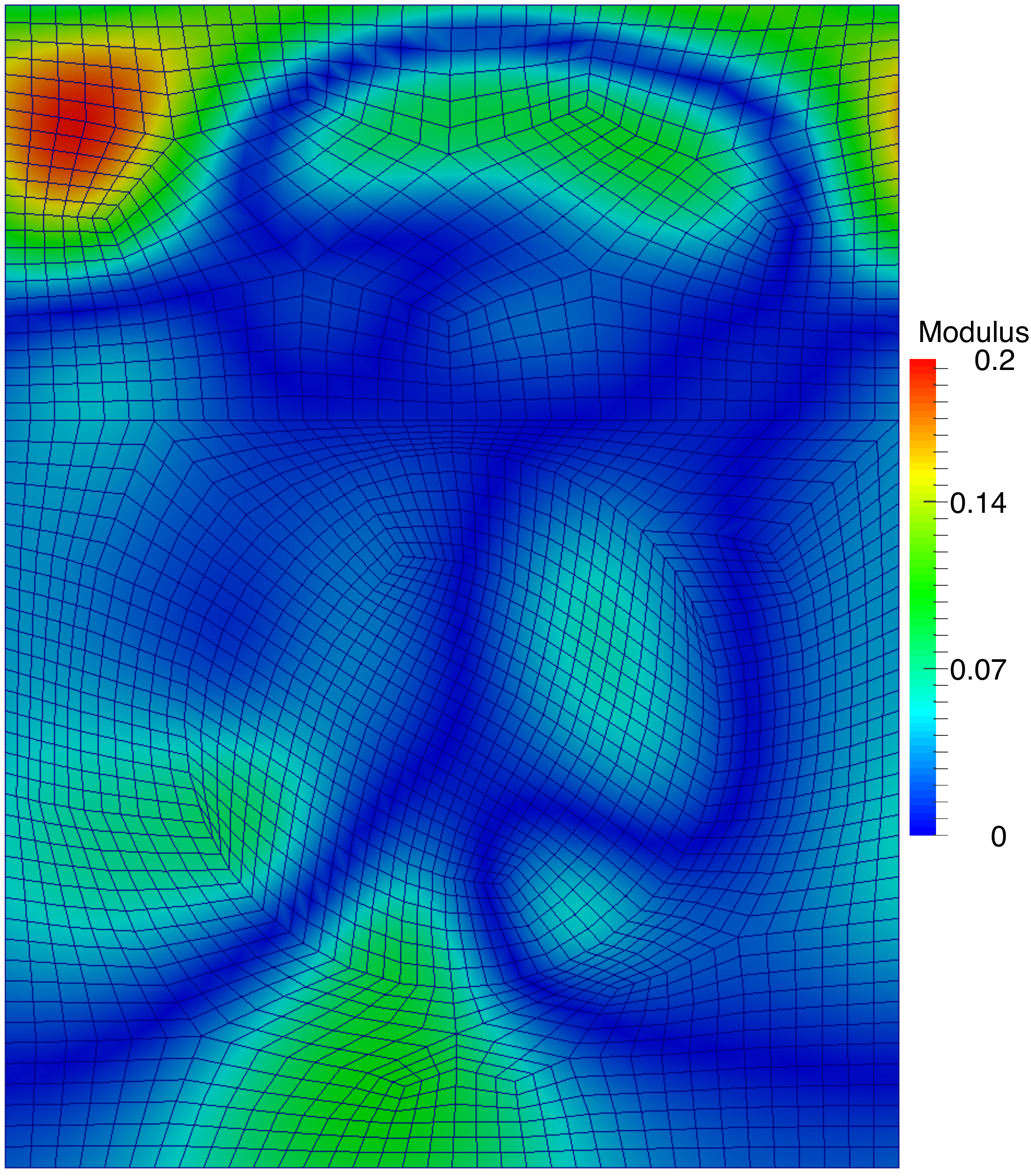}}
\end{tabular}
\end{center}
\caption{Snapshots of density plots of the absolute value of  the total field $w(\bfx,t)$ for the Sellmeier model
at times $ t=3.59,4.44,5.29,6.14,6.99,7.84$ (from top left to bottom right). }
\label{sellsnap}
\end{figure}

\section{Conclusion}\label{concl}
In this paper we have proved a general existence and continuous dependence result for time domain solutions of the grating problem with frequency dependent coefficients.  We have shown that this leads to a convergent time discretization.  Following spatial discretization with a typical non-overlapping finite element and spectral
domain decomposition technique we have verified the time stepping convergence rate, as well as demonstrating the
apparently stable numerical solution of two problems involving typical frequency dependent material 
coefficients.

The study needs to be completed by an analysis of spatial discretization including mesh truncation and this will be the subject of a future paper.  Further testing of the approach on real solar voltaic devices would also reveal the benefits and limitations of the time domain approach. \red{In particular we need to investigate the relative efficiency of time domain and frequency domain approaches.}

\section*{Acknowledgements}
The research of  L. Fan is supported by NSF grant number  DMS-1125590.  The research of P. Monk is supported in part by NSF grants DMS-1216620 and DMS-1125590.

\newpage
\bibliographystyle{elsarticle-num}
\bibliography{../D_t_N/grating.bib}

\end{document}